\newtheorem{proposition}{Proposition}
\newtheorem{theorem}{Theorem}
\newtheorem{definition}{Definition}
\newenvironment{proof}{\smallskip\noindent\textbf{Proof.}\hspace{1pt}}%
{\hspace{-5pt}\qed\smallskip\goodbreak}
\newcommand{\qed}{\nobreak\quad\nobreak\hfill\nobreak$\square$\vspace{8pt}\par}
\renewcommand{\baselinestretch}{1.1}
\def\sqr#1#2{\vbox{\hrule height .#2pt
\hbox{\vrule width .#2pt height #1pt \kern #1pt
\vrule width .#2pt}\hrule height .#2pt }}
\def\square{\sqr74}
\def\bel{\begin{equation}\label}
\def\eeq{\end{equation}}
\newcommand{\R}{{\mathbb{R}}}
\newcommand{\N}{{\mathbb{N}}}
\def\L{{\mathbf L}}
\def\D{{\cal D}}
\def\forall{\hbox{for all}~}
\def\sgn{{\rm sign}}
\def\D{\mathcal{D}}
\def\O{\mathcal{O}}
\newcommand{\tv}{\mathrm{TV}\,}
\def\Hat{\widehat}
\begin{document}
\title{An Integro-Differential Conservation Law\\ arising in a Model of Granular Flow}

\author{Debora Amadori$^*$ and Wen Shen$^{**}$\\
  \\(*): Dipartimento di Matematica Pura \& Applicata,
  University of L'Aquila, Italy. \\E-mail: {\tt amadori@univaq.it} \\
  (**):  Department of Mathematics, Penn State University, U.S.A..  \\
  E-mail: {\tt shen\_w@math.psu.edu}}

\date{}

\maketitle

\begin{abstract}
  We study a scalar integro-differential conservation law. The
  equation was first derived in \cite{AS2} as the slow erosion limit
  of granular flow.  Considering a set of more general erosion
  functions, we study the initial boundary value problem for which one
  can not adapt the standard theory of conservation laws. We construct
  approximate solutions with a fractional step method, by recomputing
  the integral term at each time step. A-priori $\L^\infty$ bounds and
  BV estimates yield convergence and global existence of BV solutions.
  Furthermore, we present a well-posedness analysis, showing that the
  solutions are stable in $\mathbf{L}^1$ with respect to the initial
  data.
\end{abstract}

\section{Introduction}
We consider the initial boundary value problem for the scalar
integro-differential equation
\begin{equation}\label{eq1}
  q_t+\left( \exp \left\{ \int_x^0  f(q(t,\xi))\,d\xi \right\} \, f(q) \right)_x = 0\,, 
  \qquad  t\ge 0\,,\quad  x\le0\,,
\end{equation}
with initial condition
\begin{equation}\label{eq:init}
q(0,x)=  \bar q(x)\,,\qquad\qquad  x\le0\,.
\end{equation}

Note that the flux includes a non-local integral term. For notational
convenience, we introduce
\begin{eqnarray}
  K(q(t,\cdot))(x)&\dot=& \exp \left\{ \int_x^0  f(q(t,\xi))\,d\xi \right\} \,.\label{K}
\end{eqnarray}

The function $f:(-1,+\infty)\to \R \in C^2(\R)$ is called the {\em
  erosion function}.  The following assumptions apply to $f$:
\begin{equation} \label{eq:f} 
f(0)=0\,,\qquad f'>0\,,\qquad f''<0\,, \qquad 
\lim_{q\to-1}f(q) = -\infty \,,\qquad
  \lim_{q\to+\infty}\frac{f(q)}q = 0 \,.
\end{equation}
We remark that the characteristic speed of (\ref{eq1}) is 
$$ \dot x = f'(q) K \,.$$
By (\ref{K}) and (\ref{eq:f}), the characteristic speed is always
positive, therefore no boundary condition is assigned at $x=0$ for
(\ref{eq1}).

The equation (\ref{eq1}) arises as the {\em slow erosion limit} in a
model of granular flow, studied in \cite{AS2}, with a specific erosion
function
\begin{equation}\label{fex}
f(q) = \frac q{q+1}\,.
\end{equation}
Note that this function satisfies all the assumptions in (\ref{eq:f}).
In more details, let $h$ be the height of the moving layer, and $p$ be
the slope of the standing profile.  Assuming $p>0$, the following
$2\times2$ system of balance laws was proposed in \cite{HK}
\begin{equation}\label{eq2}
  \left\{\begin{array}{rl}
      \displaystyle h_t-(h p)_x&\displaystyle ~=~(p-1) h\,,\cr
      \displaystyle p_t+\big((p-1)h\big)_x&~=~0\,.\end{array}\right.
\end{equation}
This model describes the following phenomenon. 
The material is divided in two parts: a moving layer with height $h$ on top 
and a standing layer with slope $p >0$ at the bottom. 
The moving layer slides downhill with speed $p$.
If the slope $p=1$ (the critical slope), the moving layer passes through without
interaction with the standing layer. 
If the slope $p > 1$,  then grains initially at rest are hit by rolling 
grains of the moving layer and start moving as well. 
Hence the moving layer gets bigger. On the other hand, if $p < 1$, 
grains which are rolling can be deposited on the bed. 
Hence the moving layer becomes smaller.

In the slow erosion limit as $\|h\|_{\mathbf{L}^\infty}\rightarrow 0$,
we proved in \cite{AS2} that the solution for the slope $p$ in
\eqref{eq2} provides the weak solution of the following scalar
integro-differential equation
\begin{equation*}
  p_\mu+\left(\frac{p-1}{p}\cdot \exp\int_x^0 {\frac{p(\mu,y)-1}
      {p(\mu,y)}}\,dy \right)_x ~=~0\,.
\end{equation*}
Here, the new time variable $\mu$ accounts for the total mass of
granular material being poured downhill. Introducing $q\doteq p-1$ and
writing $t$ for $\mu$, we obtain the equation (\ref{eq1}) with
(\ref{fex}).

The result in \cite{AS2} provides the existence of entropy weak
solutions to the initial boundary value problem (\ref{eq1}) with $f$
given in (\ref{fex}) for finite ``time'' (which is actually finite
total mass).  However, well-posedness property was left open due to
the technical difficulties caused by the non-local term in the flux.
Furthermore, due to the discontinuities in $q$, the function
$k(t,x)=K(q(t,\cdot))(x)$ is only Lipschitz continuous in its
variables, therefore one can not apply directly previous results.
Indeed, classical results as \cite{Kru} require more smoothness on the
coefficients; see also \cite{CoMeRo}. Some closer results can be found
in \cite{KlRi,LTW} where the coefficient $k=k(x)$ does not depend on
time.

In this paper we consider a class of more general erosion functions $f$
that satisfy the assumptions in
(\ref{eq:f}), and we study existence and well-posedness of BV
solutions of (\ref{eq1}).  Assuming that the slope is always positive,
i.e., $q > -1$, we seek BV solutions with bounded total mass.
Therefore, we define $\D=\D_{C_0,\kappa_0}$ as the set of functions
that satisfy
\begin{eqnarray}\label{def:calD}
  \D_{C_0,\kappa_0} \doteq \left\{q(x):\ \  \inf_{x<0}\, q(x) \geq \kappa_0>-1\,, 
  \quad \tv\{q\}\leq C_0\,, \quad\| q\|_{\L^1(\R_-)}\leq C_0 \right\} \,.
\end{eqnarray}
Assume that the initial data satisfies $\bar q \in \D_{C_0,\kappa_0}$
for some constants $C_0>0,\kappa_0>-1$. A natural definition of
entropy weak solution is given below.

\begin{definition}\label{def1} Let $T>0$.
  A function $q$ is an \textbf{entropy weak solution} to \eqref{eq1}
  on $[0,T]\times\R_-$ with initial condition \eqref{eq:init}, if
the following holds.
\begin{itemize}
\item[\mbox{\bf (H1)}] $q:[0,T]\to\L^1(\R_-)\cap BV(\R_-)$, 
  $\inf_{x} q(t,x) >-1$,
  and the map $[0,T]\ni t\mapsto q(t)$ is Lipschitz in $\L^1(\R_-)$;
\item[\mbox{\bf (H2)}] $q$ is a weak entropy solution of the scalar
  conservation law
\begin{eqnarray}\label{eq:fixed_k-intro}
\left\{
  \begin{array}{l}
  q_t+\left( k(t,x)\, f(q)\right)_x =0\,,  \\[1mm]
  q(0,x)= \bar q(x)
\end{array}\right.
\end{eqnarray}
with $k$ defined by
\begin{eqnarray}
  k(t,x)&=&   K(q(t,\cdot))(x) ~=~ \exp \left\{ \int_x^0  f(q(t,\xi))\,d\xi \right\}\,.\label{k}
\end{eqnarray}
\end{itemize}
\end{definition}

Notice that, thanks to \textbf{(H1)}, the coefficient $k(t,x)$ in
\eqref{k} is Lipschitz continuous on $[0,T]\times\R_-$.

\smallskip
Now we state the main result of this paper.

\begin{theorem}\label{th:1} Assume \eqref{eq:f} and let $C_0>0$, $\kappa_0>-1$
  be given constants.  Then for any initial data $\bar q\in
  \D_{C_0,\kappa_0}$ there exists an entropy weak solution $q(t,x)$ to
  the initial-boundary value problem \eqref{eq1}--\eqref{eq:init} for
  all $t\ge 0$.  Moreover, consider two solutions $q_1(t,\cdot)$,
  $q_2(t,\cdot)$ of the integro-differential equation (\ref{eq1}),
  corresponding to the initial data
$$
q_1(0,x)= \bar q_1(x)\,,\qquad\qquad q_2(0,x) =\bar q_2(x)\,,\qquad\qquad
x<0\,,
$$
with $\bar q_1$, $\bar q_2 \in \D_{C_0,\kappa_0}$. Then for any $T>0$
there exists $L=L(T,C_0,\kappa_0)>0$ such that
\begin{eqnarray}\label{continous-dep-on-init-data}
  \|q_1(t,\cdot)- q_2(t,\cdot)\|_{\L^1(\R_-)}
  &\leq& {\rm e}^{Lt} \, \|\bar q_1-\bar q_2\|_{\L^1(\R_-)}\,,\qquad t\in [0,T]\,.
\end{eqnarray}
\end{theorem}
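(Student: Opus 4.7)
The plan is to treat the two halves of the theorem separately: first build existence via a fractional-step scheme, then derive $\L^1$-stability by a Gronwall loop based on comparing each solution to an auxiliary scalar law with a frozen Lipschitz coefficient.

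For existence, I would partition $[0,T]$ into time steps of size $\Delta t$ and, on each slab $[t_n,t_{n+1}]$, freeze the integral by setting $k_n(x) = K(q^{\Delta t}(t_n,\cdot))(x)$ and then solve the local conservation law $q_t + (k_n(x)f(q))_x=0$ with datum $q^{\Delta t}(t_n,\cdot)$; classical scalar theory with Lipschitz coefficient gives a unique entropy solution on each slab. The decisive a priori estimates, required uniformly in $\Delta t$, are: a lower bound $\inf_x q^{\Delta t}(t,x)\ge \kappa(T)>-1$, exploiting the blow-up of $f$ near $-1$ to prevent $q$ from reaching that value; uniform $\L^\infty$ and $\L^1$ bounds; a uniform total variation bound in $x$; and Lipschitz continuity of $[0,T]\ni t\mapsto q^{\Delta t}(t,\cdot)\in \L^1(\R_-)$. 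The monotonicity of $k_n$ in $x$ (since $K$ is the exponential of a primitive of $f(q)$) together with the concavity of $f$ is central to closing the BV estimate against the source produced by the $x$-dependent coefficient. Helly's theorem then extracts a subsequence converging in $\L^1_{\mathrm{loc}}$ to some $q$, and since $q\mapsto K(q)$ is Lipschitz with respect to the $\L^1$ norm on $\D_{C_0,\kappa_0}$, the frozen coefficients converge to $k(t,x)=K(q(t,\cdot))(x)$, allowing passage to the limit in the weak entropy formulation and recovery of \textbf{(H1)}--\textbf{(H2)}.

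For stability, consider two entropy solutions $q_1,q_2$ with coefficients $k_i(t,x)=K(q_i(t,\cdot))(x)$. Each $q_i$ is the entropy solution of a scalar conservation law with Lipschitz coefficient $k_i$, so Kruzhkov-type stability with respect to both initial data and flux perturbation should yield
\begin{equation*}
\|q_1(t,\cdot)-q_2(t,\cdot)\|_{\L^1} \le \|\bar q_1-\bar q_2\|_{\L^1} + C\int_0^t \Bigl(\|k_1(s,\cdot)-k_2(s,\cdot)\|_{\L^\infty} + \tv\{(k_1-k_2)(s,\cdot)\}\Bigr)\,ds.
\end{equation*}
The exponential form of $K$, combined with the uniform $\L^\infty$ and BV bounds supplied by $\D_{C_0,\kappa_0}$, gives the feedback estimate $\|k_1(s,\cdot)-k_2(s,\cdot)\|_{\L^\infty}+\tv\{(k_1-k_2)(s,\cdot)\}\le C'\|q_1(s,\cdot)-q_2(s,\cdot)\|_{\L^1}$. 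Inserting this and applying Gronwall's inequality produces the desired bound \eqref{continous-dep-on-init-data} with $L=L(T,C_0,\kappa_0)$.

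The principal obstacle is the stability half. Existence reduces, once the a priori estimates are in hand, to a fairly standard compactness argument; by contrast, the non-local dependence of $k$ on $q$ means that classical Kruzhkov stability is not directly applicable, and one must track perturbations of the coefficient in both the $\L^\infty$ and BV senses, not just in $\L^1$. Deriving a quantitative perturbation estimate for scalar conservation laws with Lipschitz (but not $C^1$) coefficient, and then closing the Gronwall loop using the BV content of $q_i$, is the delicate technical point; a doubling-of-variables argument tailored to the non-local structure, together with the uniform lower bound on $q_i$ to keep $k_i$ and $f(q_i)$ controlled, appears essential.
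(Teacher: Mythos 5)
Your overall strategy --- a fractional step scheme with frozen coefficient $k_n(x)=K(q(t_n,\cdot))(x)$ for existence, then a coefficient-perturbation estimate combined with the feedback bounds $\|k_1-k_2\|_{\L^\infty}+\tv\{(k_1-k_2)(s,\cdot)\}\le C\|q_1-q_2\|_{\L^1}$ and Gronwall for stability --- is the same as the paper's. Your stability half matches the paper's argument in substance: the paper organizes it through an auxiliary solution $\hat q$ and a homotopy of coefficients $k^\theta$, and obtains the needed perturbation estimate for Lipschitz (non-$C^1$), time-dependent coefficients by another fractional step that reduces to a Klausen--Risebro estimate for time-independent $k=k(x)$, rather than by a new doubling-of-variables argument; but the resulting Gronwall inequality is exactly your displayed one.

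The genuine gap is in the existence half: as written, your estimates can only give existence \emph{locally} in time, while the theorem asserts existence for all $t\ge 0$. The bounds you list for $q^{\Delta t}$ close only through a bootstrap: $|k_x|=|k\,f(q)|$ is controlled by the very sup/inf bounds on $q$ that one is trying to propagate, so the ODE-comparison argument works only on an interval whose length is determined by $\kappa_0$, $C_0$ (this is precisely the paper's local existence theorem), and the constants degrade as $t$ grows; restarting slab after slab, the existence times may shrink and sum to a finite horizon. The paper needs a separate idea to go global (its Subsection 3.2), applied to the \emph{exact} limit solution rather than to the approximations: along minimal backward characteristics one has $q'=-k_xf(q)=k f(q)^2\ge 0$, giving the time-independent lower bound $\inf_x q(t,\cdot)\ge\kappa_0$; and for the upper bound --- the truly delicate one when $f$ is bounded at $+\infty$, e.g. $f(q)=q/(q+1)$, so that a bound on the flux gives no bound on $q$ --- one shows that $W(t,x(t))+q/f(q)$ is nonincreasing along characteristics, where $W(t,x)=\int_{-\infty}^x|q(t,y)|\,dy$; since $q/f(q)\to+\infty$ as $q\to+\infty$ by the sublinearity hypothesis in \eqref{eq:f}, this yields a time-independent upper bound, and with it time-uniform BV bounds and global prolongation. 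Your proposal contains no mechanism of this kind. A smaller inaccuracy: $k_n$ is not monotone in $x$ (its derivative $-k_nf(q)$ changes sign with $q$), and neither that monotonicity nor the concavity of $f$ is what closes the BV estimate; the paper instead bounds the total variation of the complete flux $k f(u)$ for fixed $k$, and, for the nonlocal problem, compares $q(t,\cdot)$ with its translate $q(t,\cdot-h)$, which solves the equation with the shifted coefficient, again via the coefficient-perturbation estimate.
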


\medskip Recalling that $q=p-1=u_x-1$, the solution $q$ established by
Theorem~\ref{th:1} allows us to recover the profile $u$ of the
standing layer:
\begin{eqnarray}\label{u}
u(t,x) - x &=& \int_{-\infty}^x q(t,y)\,dy\,.
\end{eqnarray}
Moreover, since $K_x = - K f(q(t,x))$, the equation~\eqref{eq1} can be
rewritten as
\begin{eqnarray*}
  q_t - K_{xx}=0\,.
\end{eqnarray*}
Integrating in space on $(-\infty,x)$, using \eqref{u} and that
$K_x(q(t,\cdot))\in \L^1(\R_-)$, we arrive at
\begin{eqnarray*} 
  u_t - K_x &=& u_t + K f\left(u_x-1\right)
  ~=~0\,.
\end{eqnarray*}
This nonlocal Hamilton-Jacobi equation is studied in \cite{SZ}, with a
different class of erosion functions $f$.  Assuming more erosion for
large slope, i.e., $\lim_{q\to+\infty} f'(q) = \eta_0 >0$, the slope
$u_x$ of the standing layer would blowup, leading to jumps in the
standing profile $u$.  Notice that, in our case, only upward jumps in
$u_x$ can occur as singularities, which corresponds to convex
kinks in the profile $u$.

About the continuous dependence notice that, when $k$ is a prescribed
coefficient, the $\L^1$ stability estimate
(\ref{continous-dep-on-init-data}) holds with $L=0$, see
\eqref{L1-contraction}. On the other hand, for the integral equation
\eqref{eq1}, one cannot expect $L=0$ in general. Indeed, a small
variation in the $\L^1$ norm of the initial data may cause a variation
in the global term and then in the overall solution. However, a
special case in which (\ref{continous-dep-on-init-data}) holds with
$L=0$ is when $q_2\equiv 0$, which indeed is a solution of
\eqref{eq1}.

\smallskip Other problems involving a nonlocal term in the flux have
been considered in \cite{Daf88, Ch-Chr,CoHeMe}.  Well-known
integro-differential equations which lead to blow up of the gradients
include the Camassa-Holm equation \cite{CH} and the variational wave
equation \cite{BPZ}.
The Cauchy problem for (\ref{eq1}) with initial data with bounded support 
is studied in \cite{AS4} where we use piecewise constant approximation 
generated by front tracing and obtain similar results.

\smallskip The rest of the paper is structured in the following way.
As a step toward the final result, in Section 2 we study the existence
and well-posedness of the scalar equation (\ref{eq:fixed_k-intro}) for
a \textit{given} coefficient $k(t,x)$.  Here $k(t,x)$ is a local term,
and preserves the properties of the global integral term.  Such
equation does not fall directly within the classical framework of
\cite{Kru}, where more regularity on the coefficients is required
($C^1$).  In particular, BV estimates for solutions of
(\ref{eq:fixed_k-intro}) are needed to obtain the continuous
dependence on the initial data, see \eqref{dep_on_coefficients}. 
We employ a fractional step argument to deal with the time dependence
of $k$, and then follow an approach similar to \cite{BJ} (see also
\cite{Gue04}), where the authors deal with the case of
$k=k(x)\in\L^\infty$.  We further refer to \cite{CoMeRo} on total
variation estimates for general scalar balance laws: their result, in
our context, would require more regularity ($C^1$) on the coefficient
$k$.

The properties of the integral operator $K$, defined at \eqref{K}, are
summarized in the last Appendix.

\section{Local well-posedness of solutions with a given coefficient $k$}
\label{sec:given-coeff}
\setcounter{equation}{0}

In this section we study the well-posedness of the scalar equation
(\ref{eq:fixed_k-intro}) for a \textit{given} coefficient $k(t,x)$, by
reviewing some related results and completing the arguments where
needed.

Throughout this section, we will use $u$ as the unknown
variable. Consider
\begin{eqnarray}\label{eq:fixed_k}
&& u_t + \Big(k(t,x)f(u)\Big)_x=0\,, \qquad x\le 0, \quad t\ge 0 \\
\label{eq:initU}
&&  u(0,x)=\bar u(x)\,, \qquad\qquad\qquad    x<0
\end{eqnarray}
where $k(t,x)$ satisfies the following assumptions, for some $T>0$:

\medskip
\begin{tabular}{ll}
  &\quad $k(t,x)\in\L^\infty\left([0,T]\times \R_-\right)$, 
  it is Lipschitz continuous and $\inf_{t,x} k >0$;\\[2mm]
  {\rm \textbf{(K)}}&\quad $\tv \{k(t,\cdot)\}$, $\tv \{k_x(t,\cdot)\}$ 
  are bounded uniformly in time;\\[2mm]
  &\quad $[0,T]\ni t\to k_x(t,\cdot)\in{\L^1(\R_-)} $ is Lipschitz continuous.
\end{tabular}

\smallskip\par\noindent The above assumptions on $k$ are motivated by
the properties of the integral operator $K$, see
Proposition~\ref{properties_of_k} in the Appendix.

\begin{theorem}\label{properties_semigroup_k_fixed}
  Assume $f$ satisfies \eqref{eq:f} and $k(t,x)$ satisfies {\rm
    \textbf{(K)}}. Let $C_0>0$, $\kappa_0>-1$ be given constants.
  Then there exist two constants $C_1$ and $\kappa_1$, with possibly
  $C_1 \ge C_0$ and $-1 < \kappa_1\le \kappa_0$, and an operator
  $P:[0,T]\times\D_{C_0,\kappa_0}\to\D_{C_1,\kappa_1} $ such that:
\begin{itemize}
\item[1)] the function $u(t,x) = P_{t}(\bar u)$ is a weak entropy
  solution of \eqref{eq:fixed_k} with initial data $u(0,\cdot) = \bar
  u \in \D_{C_0,\kappa_0}$\,;
  
\item[2)] for any $\bar u_1$, $\bar u_2\in \D_{C_0,\kappa_0}$ one has
\begin{eqnarray}\label{L1-contraction}
  \| P_{t}(\bar u_1) - P_{t}(\bar u_2) \|_{\L^1(\R_-)} 
  &\le& \|\bar u_1 - \bar u_2 \|_{\L^1(\R_-)} \,.
\end{eqnarray}
\end{itemize}
\end{theorem}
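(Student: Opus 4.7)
\smallskip\noindent\textbf{Proof proposal.}
The plan is to build $P_t$ by time-splitting. Fix $\Delta t > 0$, set $t_n = n\Delta t$, and on each interval $[t_n, t_{n+1}]$ define the approximation $u^{\Delta t}$ as the Kruzkov entropy solution of the frozen-coefficient problem $u_t + (k^n(x) f(u))_x = 0$ with $k^n(x) := k(t_n, x)$, starting from $u^{\Delta t}(t_n, \cdot)$. Since $k^n$ depends only on $x$ and lies in $\L^\infty \cap BV$ by \textbf{(K)}, this frozen problem is covered by the theory of Baiti--Jenssen \cite{BJ} and Gu\'erin \cite{Gue04}; moreover the characteristic speed $k^n f'(u)$ is strictly positive by \eqref{eq:f} and \textbf{(K)}, so no boundary condition at $x=0$ is required, and characteristics carry information out of the domain. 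The operator $P_t$ will be obtained as a subsequential limit as $\Delta t \to 0$.

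On a single frozen step I would establish four uniform estimates: (i) a lower $\L^\infty$ bound $\inf u^{\Delta t} \geq \kappa_1 > -1$ via comparison with a stationary sub-solution of the frozen equation, which is delicate because $f$ is not Lipschitz near $-1$; (ii) the standard Kruzkov $\L^1$ contraction $\|u_1(t_{n+1}) - u_2(t_{n+1})\|_{\L^1} \leq \|u_1(t_n) - u_2(t_n)\|_{\L^1}$, which passes directly to the limit to give (\ref{L1-contraction}); (iii) an $\L^1$ mass bound $\|u(t_{n+1})\|_{\L^1} \leq \|u(t_n)\|_{\L^1} + C\Delta t\, \tv\{k^n_x\}\, \|f(u)\|_{\L^\infty}$; and (iv) a BV estimate
\begin{equation*}
\tv\{u(t_{n+1}, \cdot)\} \leq \tv\{u(t_n, \cdot)\} + C\Delta t\, \tv\{k^n_x\}
\end{equation*}
in the spirit of \cite{BJ}. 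Iterating these bounds and using the uniform-in-$t$ control of $\tv\{k(t,\cdot)\}$ and $\tv\{k_x(t,\cdot)\}$ from \textbf{(K)} produces constants $C_1 \geq C_0$ and $\kappa_1 \in (-1, \kappa_0]$ such that $u^{\Delta t}(t, \cdot) \in \D_{C_1, \kappa_1}$ for every $t \in [0, T]$.

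With these bounds, Helly's compactness theorem extracts a subsequence $u^{\Delta t_j} \to u$ in $\L^1_{\rm loc}$. To identify $u$ as an entropy solution of \eqref{eq:fixed_k}, I would write the Kruzkov inequality for $u^{\Delta t}$ with the piecewise-in-time coefficient $k^{\Delta t}(t,x) := k(t_n, x)$ on $[t_n, t_{n+1}]$ and pass to the limit. The hypotheses in \textbf{(K)} give $k^{\Delta t} \to k$ uniformly and $k^{\Delta t}_x \to k_x$ in $\L^1$, which suffices to take the limit in the entropy form. Lipschitz continuity of $t \mapsto u(t, \cdot)$ in $\L^1$, required for membership in $\D_{C_1,\kappa_1}$ at every $t$, follows from the standard estimate $\|u(t_1) - u(t_2)\|_{\L^1} \leq |t_1 - t_2|\, \|(k f(u))_x\|_{\L^1}$ combined with the BV bound just established.

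The main obstacle is step (iv): one needs a per-step BV increment that is genuinely linear in $\Delta t$, not a multiplicative factor on $\tv\{k^n_x\}$ itself, which would blow up upon iteration. This requires a careful interaction analysis between the wavefronts of the frozen conservation law and the spatial variation of $k^n$; the Baiti--Jenssen framework provides exactly such an estimate, but one must verify that it is stable under restarts at each $t_n$, where $u^{\Delta t}$ is continuous but the coefficient jumps in time. A secondary difficulty is the lower $\L^\infty$ bound near $-1$: since $f(q) \to -\infty$ as $q \to -1$, a naive maximum principle fails, and one must instead use the nonconservative form $u_t + kf'(u)u_x = -k_x f(u)$ along characteristics, which yields a bound on $\dot u$ as long as $u$ stays in a compact subset of $(-1, \infty)$.
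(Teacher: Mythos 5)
Your overall skeleton --- freezing $k$ on each interval $[t_n,t_{n+1})$, solving the frozen-coefficient problem, iterating the per-step Kru\v{z}kov contraction to get \eqref{L1-contraction}, and passing to the limit $\Delta t\to 0$ --- is exactly the paper's strategy, and your steps (ii), (iii) and the limit passage are fine. The genuine gap is step (iv). You attribute to \cite{BJ} a per-step estimate $\tv\{u(t_{n+1},\cdot)\}\le\tv\{u(t_n,\cdot)\}+C\,\Delta t\,\tv\{k^n_x\}$, but Baiti--Jenssen prove no such bound on $u$: their theory (built for $\L^\infty$ data) controls the sup norm and the total variation of the \emph{complete flux} $F=k\,f(u)$, which for the frozen equation is constant along characteristics and has non-increasing total variation, whereas $\tv\{u\}$ itself can grow inside a single frozen step, with no restart involved. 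A direct TV estimate on $u$ for entropy solutions is exactly the kind of result in \cite{CoMeRo}, which --- as the paper points out --- requires $C^1$ coefficients, while here $k$ is only Lipschitz with $k_x\in BV$. The paper avoids this entirely by running the induction on $\tv\{F\}$ instead: within each step $\tv\{F\}$ decays by \eqref{decay_tv_flux}; at a restart the flux is multiplied by $k(t_n,\cdot)/k(t_{n-1},\cdot)=1+O(\Delta t)$, giving $\tv\{F(t_n,\cdot)\}\le(1+L_2\Delta t)\,\tv\{F(t_{n-1},\cdot)\}+L_4\Delta t$ (see \eqref{tilde-f-fract-step}), so induction yields a bound uniform in $\Delta t$; only then is this converted, statically at each time, into \eqref{bound-on-tv-u} through $(\inf f')\,\tv\{u\}\le\tv\{f(u)\}\le\frac{1}{\inf k}\tv\{F\}+\frac{\|F\|_\infty}{(\inf k)^2}\tv\{k\}$. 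Note also that your worry about ``stability under restarts'' is misplaced for a $u$-based iteration: $u$, hence $\tv\{u\}$, does not jump when $k$ is updated; it is in the flux-based iteration that the entire cost sits at the restarts.

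Two further problems concern the $\L^\infty$ bounds. Your comparison with stationary sub-solutions is a sound idea for the lower bound (stationary solutions of the frozen equation are exactly the profiles $k(x)f(u)=\mathrm{const}$, so this is equivalent to the paper's flux argument), but you must quantify how the comparison function degrades at each restart --- a sub-solution for $k(t_n,\cdot)$ is not one for $k(t_{n+1},\cdot)$ --- which is where the paper's factor ${\rm e}^{L_2 t}$ in \eqref{eq:bound-total-flux} originates. Your fallback argument via the nonconservative form is circular as stated: bounding $\dot u=-k_x f(u)$ ``as long as $u$ stays in a compact subset of $(-1,+\infty)$'' assumes what is to be proved, and the naive estimate $\bigl|\frac{d}{dt}f(u)\bigr|\le\|k_x\|_\infty\,f'(u)\,|f(u)|$ along characteristics does not close, since $f'$ and $|f|$ both blow up as $u\to-1$ (for $f(q)=q/(q+1)$ it gives $\dot g\lesssim g^3$, compatible with finite-time blow-up); the saving fact is the exact cancellation that makes $F$ constant along characteristics, which a sup-norm bound on $k_x$ throws away. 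Finally, you never establish an upper bound on $u$, yet your constants in (iii)--(iv) involve $\sup f(u)$, hence $\sup u$ when $f$ is unbounded; the paper needs a separate argument precisely in the complementary case $f(u)\to f_0<\infty$, using boundedness of the right-hand side of \eqref{eq:u_across_chars} along backward characteristics to get linear growth of $u$.
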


\begin{proof}
  Let $\bar u\in \D_{C_0,\kappa_0}$. We introduce the parameter
  $\Delta t>0$ and define $t_n=n\Delta t$ for any integer $n\ge 0$.
  We approximate the coefficient $k$ by
\begin{eqnarray}\label{k_Deltat}
  k_{\Delta t}(t,x)&=& k(t_n, x)\qquad  (t,x)\in [t_n,t_{n+1})\times  \R_-\,,
  \quad n\ge 0
\end{eqnarray}
which is constant in time on each interval $[t_n,t_{n+1})$,
and consider the equation
\begin{equation}\label{eq:fixed_k-Dltat}
  u_t + \Big(k_{\Delta t}(t,x)f(u)\Big)_x=0\,.
\end{equation}
By adapting the analysis in \cite{BJ}, on each interval
$[t_n,t_{n+1})$ the entropy solution for (\ref{eq:fixed_k-Dltat}),
call it $u_{\Delta t}$, exists and the corresponding operator $(t,\bar
u)\mapsto u_{\Delta t}(t,\cdot)$ is contractive in $\L^1(\R_-)$,
provided that $u_{\Delta t}$ is bounded from both below and above.
Furthermore, the \textit{complete flux}
\begin{eqnarray*}
F(t,x) &\dot= & k_{\Delta t}(t,x) f(u_{\Delta t}(t,x))
\end{eqnarray*}
has the following properties: its sup norm does not increase in time,
\begin{eqnarray}\label{decay_sup_flux}
|F(t,x)|&\le& \sup |F(t_n,\cdot)|\,,\qquad 
  t\in (t_n, t_{n+1})\,,
\end{eqnarray}
as well as its total variation:
\begin{eqnarray}\label{decay_tv_flux}
  \tv \{F(t,\cdot)\} &\le&  \tv \{F(t_n,\cdot)\}\,,\qquad 
  t\in (t_n, t_{n+1})\,.
\end{eqnarray}

\smallskip
We now establish the lower and upper bounds for $u_{\Delta t}$.
For notation simplicity, in the following we denote by $k(t,x)$ and
$u(t,x)$ the approximate coefficient and solution respectively,
without causing confusion.
We define the constants $k_0$, $L$, $L_1$ such
that, recalling {\rm \textbf{(K)}}, one has:
\begin{eqnarray}
&& k_0= \inf_{t,x} k >0\,;\label{KK1}\\
&&|k(t_1,x_1) -k(t_2,x_2)|\le L \left(|t_1-t_2| + |x_1-x_2|
  \right)\quad   \forall t_i,\ x_i,\ i=1,\ 2\,;\label{KK2}\\[2mm]
&& \tv \{k(t_1,\cdot) - k(t_2,\cdot)\} = \|k_x(t_1,\cdot) -
  k_x(t_2,\cdot) \|_{\L^1(\R_-)} \le L_1 |t_1 - t_2|\label{KK3}
\end{eqnarray}
and set $L_2=L/k_0$.
We first give some formal arguments. The evolution of the complete flux $F=kf(u)$ along the
characteristic $x(t)$ with $\dot x = f'(u) k$ follows the equation
\begin{equation}\label{Fchar}
\frac{d}{dt} F (t,x(t)) = (kf)_t + f'k (kf)_x = k_t f = \frac{k_t}{k} F\,.
\end{equation}
By our assumptions (\textbf{K}), the term $k_t/k$ is uniformly bounded.
Therefore, $|F|$ grows at most at an exponential rate, and remains bounded
for finite time $t\le T$. 
Therefore  $|f(u)|$ remains bounded as well.
By the $4^{th}$ assumption in (\ref{eq:f}), 
$u$ never reaches $-1$ in finite time, leading to a lower bound on $u$. 

The same argument leads to an upper bound for $f(u)$, if $f(u) \to
+\infty$ as $u\to +\infty$. 
However, if $f(u) \to f_0 >0$ as $u\to +\infty$, we need a different argument.  
We observe that, along a characteristic $x(t)$, one has
\begin{eqnarray}\label{eq:u_across_chars}
\frac{d}{dt} u(t,x(t)) = -k_x(t,x) f(u) \,. 
\end{eqnarray}
By the lower bound on $u$, the growth of $u$ remains uniformly bounded, 
yielding an upper bound.

We now make these arguments rigorous for the approximate
solutions. At time $t=0$ one has
\begin{eqnarray}\label{eq:bounds_initial-time}
|k(0,x) f(\bar u(x))| ~ \le ~ C_1
\end{eqnarray}
for some $C_1 \ge 0$ that depends on the bounds for $k$ and $\bar u$.  
We claim that, as long as the approximate solution exists, we have
\begin{eqnarray}\label{eq:bound-total-flux}
| k(t,x) f(u(t,x)) | ~ \le ~ C_1 {\rm e}^{L_2 t}
\,.
\end{eqnarray}
Indeed, by \eqref{eq:bounds_initial-time} and \eqref{decay_sup_flux},
the inequality \eqref{eq:bound-total-flux} is valid on $[0,t_1)$.
Assume now that \eqref{eq:bound-total-flux} is valid on $[0,t_{n+1})$,
$n\ge 0$, i.e.,
\begin{equation}\label{fk}
\left| F(t,x)\right| = \left| k(t_{n},x) f(u(t_{n},x))\right| \le C_1\, {\rm e}^{L_2 t_n}\,,
\qquad t\in[t_n,t_{n+1})\,.
\end{equation}
At time $t=t_{n+1}$ one has
\begin{eqnarray*}
 && \hspace{-1cm} 
 \left| k(t_{n+1},x) f(u(t_{n+1},x))\right| ~=~ 
  \frac{k(t_{n+1},x)}{k(t_{n},x)}\, \left| k(t_{n},x) f(u(t_{n+1},x))\right| \\
  &\le & 
  \left(1 + \frac{L}{k_0}\Delta t \right) 
\cdot \sup_x \left| k(t_{n},x) f(u(t_{n+1},x))\right|
  ~\le ~ {\rm e}^{L_2 \Delta t} \cdot  C_1 \, {\rm e}^{L_2  t_{n}}
  ~= ~ C_1 \, {\rm e}^{L_2  t_{n+1}}
\end{eqnarray*}
By induction, this proves (\ref{eq:bound-total-flux}), which in turn
gives the lower bound $\kappa_1$ for $u$. The upper bound also follows
if $f(u) \to +\infty$ as $u\to +\infty$.

Finally, we consider the case that $f(u) \to f_0 >0$ as $u\to
+\infty$.  At any given point $(\bar t, \bar x)$ one can trace back
along an extremal backward generalized characteristic, which is
classical on each $(t_n,t_{n+1})$ and continuous up to $t=0$.  Since
now the r.h.s. of \eqref{eq:u_across_chars} is bounded, then $u$ grows
at a linear rate, and therefore remains bounded.

We remark that the lower bound yields an a-priori bound on the wave
speed. Indeed, since $f'$ is a decreasing function, the characteristic
speed is bounded,
\begin{eqnarray*}
  \lambda = k f'(u) &\le&  \|k\|_\infty\, f'\left(\kappa_1\right)\,.
\end{eqnarray*}

\smallskip
\noindent\textbf{Bound on total variation.} 
We estimate the total variation of $F(t,x) = k(t,x) f(u(t,x))$.  On
the interval $(t_n,t_{n+1})$ the coefficient $k$ is constant in time
and we use \eqref{decay_tv_flux}.  On the other hand, the total
variation might increase at $t_n$ when $k$ is updated.  
Then we observe that
\begin{eqnarray}\label{eq:updated-flux-frac-step}
  F(t_n,x)&=& 
 \left[1+\frac{k(t_n,x)- k(t_{n-1},x)}{k(t_{n-1},x)}\right]\,F(t_n-,x)\,,
\end{eqnarray}
therefore
\begin{eqnarray}
  \tv \{F(t_n,\cdot)\} &\le& 
  \left(1+\frac{\|k(t_n,\cdot)- k(t_{n-1},\cdot)\|_\infty}{\inf k(t_{n-1},\cdot)}
  \right)\,\tv \{F(t_{n}-,\cdot)\} \nonumber\\
  &&\qquad +~\sup |F|\cdot \tv \left\{\frac{k(t_{n},\cdot) -k(t_{n-1},\cdot)}{k(t_{n-1},\cdot)}\right\}\,.
  \label{tilde-f-fract-step}
\end{eqnarray}
Thanks to \eqref{KK1}--\eqref{KK3}, we have 
$$
  \frac{\|k(t_n,\cdot)- k(t_{n-1},\cdot)\|_\infty}{\inf k(t_{n-1},\cdot)}~\le ~ L_2 \Delta t\,, 
  \qquad
    \tv \left\{\frac{k(t_{n},\cdot)-k(t_{n-1},\cdot)}{k(t_{n-1},\cdot)}\right\}
  ~\le~ L_3 \,\Delta t\,,
$$
for a suitable constant $L_3$ independent on $\Delta t$. Moreover
$F=kf$ is uniformly bounded thanks to {\rm \textbf{(K)}} and the
bounds on $u$. Hence we conclude that
\begin{eqnarray*}
  \tv \{F(t_n,\cdot)\} &\le& 
\left(1+L_2\Delta t \right)\,\tv \{F(t_{n-1},\cdot)\}  ~+~ L_4\Delta t
\end{eqnarray*}
for a suitable $L_4>0$. By induction it follows that 
\begin{eqnarray*}
  \tv \{F(t,\cdot)\} &\le& {\rm e}^{L_2 t}\tv \{F(0+,\cdot)\} ~+~ \frac{L_4}{L_2}\left({\rm e}^{L_2 t}-1 \right)\,. 
\end{eqnarray*}
Recalling that $f(u)= F / k$, one obtains the 
$BV$ bound for $f(u(t))$,
\begin{eqnarray*}
  (\inf f')\, \tv \{u(t,\cdot)\} ~\le~ \tv \{f(u(t,\cdot))\} &\le& \frac{1}{\inf k} \tv\{ F(t,\cdot)\} ~+~ 
  \frac{\|F \|_\infty}{(\inf k)^2} \tv \{k(t,\cdot)\}\,.
\end{eqnarray*}
This gives a bound on the total variation for  $u(t)$:
\begin{eqnarray}\label{bound-on-tv-u}
  \tv \{u(t)\} &\le& C\left[\tv \{F(t,\cdot)\}+\tv \{k(t,\cdot)\}\right]~\le~C_1(t)
\end{eqnarray}
where the constant $C$ depends on $\inf_x u$, $\sup_x u$, $\inf_x k$,
$\sup_x k$. Hence the total variation of $u$ may increase in time but
it remains bounded as long as $u$ remains bounded.

\medskip\par
Taking the limit $\Delta t\to 0$, the coefficient $k_{\Delta t}$ converges uniformly
to $k$. Correspondingly, the family $u_{\Delta t}$ converges (up to a
subsequence) to a weak solution $u$ of the original equation,
satisfying the same upper and lower bounds and \eqref{bound-on-tv-u}.

Moreover, in the limit as $\Delta t\to 0$, the Kru\v{z}kov entropy
inequalities for equation~\eqref{eq:fixed_k}
\begin{eqnarray}\label{kruzkov}
\partial_t |u-\alpha| ~+~ \partial_x \left[k(x,t) |f(u)-f(\alpha)|\right] 
~+~ \sgn(u-\alpha) k_x(x,t) f(\alpha) &\le& 0
\end{eqnarray}
for all $\alpha \in\R$, hold in the sense of distributions.
\end{proof}

Next we establish the continuous dependence on the coefficient
function.  We rely on a result in \cite{KlRi} (Corollary 3.2) that
applies to Cauchy problems and to the case of $k=k(x)$, that is, the
coefficient does not depend on time.

For convenience of the reader we report that statement of \cite{KlRi}
adapted to our situation. Consider the two equations
\begin{eqnarray}
  u_t + \Big(kf(u)\Big)_x&=&0\,, \qquad \quad t\ge 0 \,, \label{eq:fk1}\\
  u_t + \Big(\tilde kf(u)\Big)_x&=&0\,, 
  \qquad \quad t\ge 0 \,. \label{eq:fk2}
\end{eqnarray}

\begin{proposition}\label{Prop:KlaRis}
  For $x\in\R$, let $k(x)$, $\tilde k(x)\in BV(\R)$ satisfy
\begin{eqnarray*}
  k_x\,,\  \tilde k_x \in BV(\R)\,;\qquad \inf k\,,\ \inf \tilde k \ge \alpha >0 
\end{eqnarray*}
for some positive $\alpha$. Consider the initial data $u_0$, $\tilde
u_0\in BV(\R)$ for the two equations \eqref{eq:fk1}, \eqref{eq:fk2}
respectively and let $u(t,x)$, $\tilde u(t,x)$ be the corresponding
solutions, assuming that they are bounded from above and bounded away
from $-1$. Let $C_1$ be a bound on $|f|$ over the range of the
solutions. Then
\begin{eqnarray}\nonumber
  &&  \|u(t,\cdot) - \tilde u(t,\cdot)\|_{\L^1(\R)}~\le~  
  \|u_0 - \tilde u_0\|_{\L^1(\R)}  \\
  && \qquad\qquad +~  t \left\{C_1 \tv\{k-\tilde k\} +  C_2 \left(1+\tv u_0 + 
      \tv\tilde u_0 \right)\|k-\tilde k\|_\infty\right\}\label{estim:KlaRis}
\end{eqnarray}
where $C_2$ depends on the bounds on $u$, $k$, $\tv\{k\}$ and on $\tilde u$, $\tilde k$, 
$ \tv \{\tilde k\}$.
\end{proposition}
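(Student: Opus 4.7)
The plan is to apply Kru\v{z}kov's doubling-of-variables technique to the entropy inequalities for \eqref{eq:fk1} and \eqref{eq:fk2}, regarding $\tilde u$ as an entropy solution of \eqref{eq:fk1} perturbed by the source $\partial_x[(\tilde k-k)f(\tilde u)]$. The expected output is an $\L^1$ evolution inequality for $u-\tilde u$ in which the defect between $k$ and $\tilde k$ enters as a forcing term, linear in $\|k-\tilde k\|_\infty$ and $\tv\{k-\tilde k\}$; integrating in time then produces \eqref{estim:KlaRis}.

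Concretely, I would start from the entropy inequality for $u$ in the form \eqref{kruzkov}, and the analogous one for $\tilde u$ with $\tilde k$. Taking $\alpha=\tilde u(s,y)$ in the first and $\alpha=u(t,x)$ in the second, pairing with a symmetric mollifier concentrated on the diagonal $(t,x)=(s,y)$, and passing to the limit in the mollification parameter gives, after integration in $x\in\R$,
\begin{equation*}
  \frac{d}{dt}\|u(t,\cdot)-\tilde u(t,\cdot)\|_{\L^1(\R)} ~\le~ \int_\R \bigl|\partial_x\bigl[(k-\tilde k)\,f(\tilde u)\bigr]\bigr|\,dx.
\end{equation*}
A symmetric version with the roles of $u$ and $\tilde u$ swapped gives the same bound with $\tilde u$ replaced by $u$, which accounts for the symmetric combination $\tv u_0 + \tv\tilde u_0$ in the final estimate. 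Expanding the right-hand side by the product rule,
\begin{equation*}
  \bigl|\partial_x[(k-\tilde k)f(\tilde u)]\bigr| ~\le~ |(k-\tilde k)_x|\,|f(\tilde u)| + \|k-\tilde k\|_\infty\,|f'(\tilde u)|\,|\tilde u_x|,
\end{equation*}
the first piece integrates to at most $C_1\,\tv\{k-\tilde k\}$ with $C_1$ bounding $|f|$ on the range of the solutions, while the second is controlled by $C\,\|k-\tilde k\|_\infty\,\tv\{\tilde u(t,\cdot)\}$. Using the BV estimate \eqref{bound-on-tv-u} from Theorem~\ref{properties_semigroup_k_fixed} to bound $\tv\{\tilde u(t,\cdot)\}$ in terms of $\tv\tilde u_0$, $\tv\tilde k$, and the sup/inf bounds on $\tilde u$, $\tilde k$, and then integrating in time, one obtains \eqref{estim:KlaRis}.

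The main technical obstacle is the low regularity of the coefficients. Kru\v{z}kov's inequality already contains the term $\sgn(u-\alpha)\,k_x\,f(\alpha)$, which is only a bounded measure under the hypothesis $k_x\in BV$, and the doubling-of-variables step requires analogous pairings to be made sense of for $\tilde k_x f(\tilde u)$. The natural way around this is to approximate $k$ and $\tilde k$ by smooth coefficients $k^\varepsilon$, $\tilde k^\varepsilon$ with uniformly bounded $\tv\{k_x^\varepsilon\}$, derive the estimate in the smooth setting where the classical Kru\v{z}kov machinery applies verbatim, and then pass to the limit as $\varepsilon\to0$; the uniform BV bounds on the approximate solutions from Theorem~\ref{properties_semigroup_k_fixed} supply the compactness needed to justify the limits of both the entropy-dissipation terms and the error term. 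Once this regularization step is in place, the remaining estimates are purely algebraic and follow the outline above.
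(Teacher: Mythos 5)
The paper offers no proof of this proposition at all: it is imported, in adapted form, from Klausen and Risebro \cite{KlRi} (Corollary 3.2), so there is no internal argument for your attempt to match, and the relevant comparison is with the route taken in that literature. Your proposal is, in substance, a correct reconstruction of that route. Viewing $\tilde u$ as an entropy solution of \eqref{eq:fk1} with source $\partial_x[(k-\tilde k)f(\tilde u)]$ (your sign is flipped, but only the absolute value enters) is legitimate: for each Kru\v{z}kov entropy the defect is $\partial_x[(k-\tilde k)\,\sgn(\tilde u-\alpha)(f(\tilde u)-f(\alpha))]+\sgn(\tilde u-\alpha)(k-\tilde k)_x f(\alpha)$, in which the $f(\alpha)$ terms cancel, leaving a measure whose total mass is bounded by $|f(\tilde u)|\,|d(k-\tilde k)_x| + \|k-\tilde k\|_\infty |f'(\tilde u)|\,|d\tilde u_x|$; the doubling argument then yields your $\L^1$ inequality and the splitting into $C_1\tv\{k-\tilde k\}$ plus $\|k-\tilde k\|_\infty\,\|f'\|_\infty\,\tv\{\tilde u(t,\cdot)\}$ is exactly right. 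Two points need care in a write-up. First, the bound on $\tv\{\tilde u(t,\cdot)\}$ must be uniform in $t$ for the final estimate to be linear in $t$ with $t$-independent $C_2$; here the coefficients are time independent, so the decay \eqref{decay_tv_flux} of the flux variation gives this, whereas \eqref{bound-on-tv-u} as stated in Theorem~\ref{properties_semigroup_k_fixed} carries a time-growing constant coming from the fractional-step updates, which are absent in this setting. Second, your regularization step needs a mechanism to identify $\lim_{\varepsilon} u^\varepsilon$ with the solution $u$ of the statement: compactness alone produces some entropy solution for the rough coefficient, and you must either invoke uniqueness for Lipschitz $k$ with $k_x\in BV$ (itself a rough-coefficient doubling result, cf.\ Karlsen and Risebro \cite{KaRi}) or run a Cauchy-sequence bootstrap by applying the smooth-coefficient estimate to pairs $k^\varepsilon$, $k^{\varepsilon'}$. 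In short, your route buys a self-contained proof, at the cost of carrying out (rather than citing) the one genuinely delicate step — the doubling argument when $k_x$ is only $BV$, where the cross terms in $k_x(x)-k_x(y)$ are killed by $\L^1$-continuity of translations rather than by continuity of $k_x$ — which is precisely the technical work the paper avoids by quoting \cite{KlRi}.
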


The continuous dependence property 
for our problem 
follows from Proposition~\ref{Prop:KlaRis}, by properly extending the
IBVP into Cauchy problems.

\begin{theorem}\label{3}
  For $x<0$, let $k(t,x)$, $\tilde k(t,x)$ satisfy the assumption
  \textbf{(K)}, and assume that the initial data $\bar u$ belongs to
  $\D_{C_0,\kappa_0}$ (defined at \eqref{def:calD}). Let $u(t,\cdot)$,
  $\tilde u(t,\cdot)$ be the solutions of the conservation laws
  \eqref{eq:fk1}, \eqref{eq:fk2} respectively, with the same initial
  data $\bar u$, for some time interval $[0,T]$ ($T>0$).

Then, the following estimate holds
\begin{eqnarray}\nonumber
&& \frac 1t \| u(t,\cdot) - \tilde u(t,\cdot) \|_{\L^1(\R_-)} ~\le~
    \Hat C_1 \sup_{t\in[0,T]}\tv\left\{k(t,\cdot)- \tilde k(t,\cdot) \right\} \\
&&\qquad +~ \Hat C_2 \left(1+\sup_\tau \tv u(\tau,\cdot) + 
      \sup_\tau \tv\tilde u(\tau,\cdot) \right)\|k- \tilde k\|_{\L^\infty([0,t]\times\R_-)}  
\label{dep_on_coefficients}
\,,
\end{eqnarray}
where $\Hat C_1$ is a bound on $|f|$ over the range of the solutions
and $\Hat C_2$ depends on the bounds on the solutions, the
coefficients and their total variation $\tv\{k(t,\cdot\}$,
$\tv\{\tilde k(t,\cdot\}$.
\end{theorem}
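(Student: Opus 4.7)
The strategy is to combine two reductions. First, I would extend the IBVP on $\R_-$ to a Cauchy problem on $\R$: since the characteristic speed $f'(u)\,k$ is strictly positive, the backward domain of dependence of any $(t,x)$ with $x<0$ lies inside $\R_-$ at time zero, so one may prolong $k(t,\cdot)$ and $\tilde k(t,\cdot)$ to $(0,+\infty)$ by the constant values $k(t,0)$ and $\tilde k(t,0)$, and extend $\bar u$ by zero, without modifying the restrictions of $u,\tilde u$ to $\R_-$. This choice preserves assumption $\textbf{(K)}$ on all of $\R$, introduces at most one extra jump in $k_x,\tilde k_x$ at $x=0$ (so $k_x,\tilde k_x$ stay in $BV(\R)$), and leaves $\tv\{k(t,\cdot)-\tilde k(t,\cdot)\}$ unchanged because $k-\tilde k$ is constant on $(0,+\infty)$.

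Second, I would apply the fractional-step-in-time construction of Theorem~\ref{properties_semigroup_k_fixed}: define $k_{\Delta t},\tilde k_{\Delta t}$ as in \eqref{k_Deltat} and let $u_{\Delta t},\tilde u_{\Delta t}$ be the corresponding entropy solutions, which converge to $u,\tilde u$ in $\L^1$ as $\Delta t\to 0$. On each sub-interval $[t_n,t_{n+1})$ the frozen coefficients do not depend on time, so Proposition~\ref{Prop:KlaRis}, applied with initial data $u_{\Delta t}(t_n,\cdot),\tilde u_{\Delta t}(t_n,\cdot)$ and time step $\Delta t$, delivers
\[
\|u_{\Delta t}(t_{n+1})-\tilde u_{\Delta t}(t_{n+1})\|_{\L^1}\le \|u_{\Delta t}(t_n)-\tilde u_{\Delta t}(t_n)\|_{\L^1}+\Delta t\,\Psi_n,
\]
where $\Psi_n=\Hat C_1\,\tv\{k(t_n,\cdot)-\tilde k(t_n,\cdot)\}+\Hat C_2\bigl(1+\tv u_{\Delta t}(t_n,\cdot)+\tv\tilde u_{\Delta t}(t_n,\cdot)\bigr)\|k(t_n,\cdot)-\tilde k(t_n,\cdot)\|_{\L^\infty}$. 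The uniform $\L^\infty$ and BV bounds from Theorem~\ref{properties_semigroup_k_fixed}, in particular \eqref{bound-on-tv-u}, guarantee that $\Hat C_1,\Hat C_2$ may be taken independent of $n$ and $\Delta t$.

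Telescoping from $n=0$ to $n=N-1$ with $N\Delta t=t$, using that the initial difference vanishes (both problems share the datum $\bar u$), and bounding each $\Psi_n$ by its supremum over $[0,T]$, one reaches
\[
\|u_{\Delta t}(t)-\tilde u_{\Delta t}(t)\|_{\L^1}\le t\Bigl(\Hat C_1\sup_{\tau\in[0,T]}\tv\{k(\tau,\cdot)-\tilde k(\tau,\cdot)\}+\Hat C_2\bigl(1+\sup_\tau\tv u+\sup_\tau\tv\tilde u\bigr)\|k-\tilde k\|_{\L^\infty([0,t]\times\R_-)}\Bigr),
\]
and passing to the limit $\Delta t\to 0$ yields \eqref{dep_on_coefficients}.

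The main obstacles I anticipate are twofold. First, the extension to $\R$ must be chosen so as to simultaneously satisfy the $BV$-on-$k_x$ hypothesis of Proposition~\ref{Prop:KlaRis} and preserve the IBVP solutions on $\R_-$; both properties ultimately rely on the strict positivity of the wave speed and on the carefully chosen constant extension at $x=0$. Second, keeping $\Hat C_1,\Hat C_2$ uniform across the telescoping sum is precisely what the a-priori BV estimate \eqref{bound-on-tv-u} is used for, and is what prevents the bound from degenerating as $\Delta t\to 0$.
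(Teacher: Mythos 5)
Your proposal is correct and follows essentially the same route as the paper's proof: extend the IBVP to a Cauchy problem on $\R$ by constant prolongation of the coefficients (justified by the strictly positive wave speed), freeze the coefficients in time via the fractional-step scheme of Theorem~\ref{properties_semigroup_k_fixed}, apply Proposition~\ref{Prop:KlaRis} on each sub-interval $[t_n,t_{n+1})$, telescope using $e_\Delta(0)=0$, and pass to the limit $\Delta t\to 0$. The only cosmetic difference is that the paper extends $\bar u$ to $x>0$ by the constant $\bar u(0-)$ rather than by zero, avoiding an extra jump at $x=0$; your variant works equally well, since that jump has bounded strength (so the $\tv_{\R}$ versus $\tv_{\R_-}$ discrepancy stays bounded, as the paper also notes) and only emits waves into $x>0$.
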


\begin{proof}
The IBVP \eqref{eq:fixed_k}--\eqref{eq:initU} can be extended to 
  the following  Cauchy problem
\begin{equation}\label{eq:fkc}
  u_t + \Big(k(t,x)f(u)\Big)_x=0\,, 
  \qquad x\in \R, \quad t\ge 0 \,,
\end{equation}
with extended initial data 
\begin{eqnarray}\label{initc}
  u(0,x)&=&\left\{ \begin{array}{ll} \bar u(x) \quad  &\mbox{for}~ x\le0\,, \\
      \bar u(0-)       & \mbox{for}~x>0 \end{array}\right. 
\end{eqnarray}
and the extended coefficient function $k(t,x)$
$$k(t,x) ~=~  \lim_{y\to 0-}k(t,y)\qquad \mbox{for}~x>0\,.$$

Due to the fact that the characteristic speed is positive, the
solution for the Cauchy problem \eqref{eq:fkc}--\eqref{initc}
restricted on $x\le0$ will match the solution for the IBVP
\eqref{eq:fixed_k}.

In a same way, the IBVP \eqref{eq:fk2} is extended to the Cauchy problem for
\begin{equation}\label{eq:fkc2}
  u_t + \Big(\tilde k(t,x)f(u)\Big)_x=0\,, 
  \qquad x\in\R\,, \quad t\ge 0 
\end{equation}
with data \eqref{initc}. Without causing confusion, let's still denote
$u(t,x)$ and $\tilde u(t,x)$ the solutions for \eqref{eq:fkc} and
\eqref{eq:fkc2}, respectively, and let $u_{\Delta}(t,x)$ and $\tilde
u_{\Delta}(t,x)$ be the corresponding approximate solutions,
constructed in the same way as in the proof of
Theorem~\ref{properties_semigroup_k_fixed}, with approximate
coefficients $k_{\Delta t}$ and $\tilde k_{\Delta t}$ as in
\eqref{k_Deltat}.

Denote the distance between these two solutions by
$$ 
e_\Delta(t) ~\doteq ~\left\| u_\Delta (t,\cdot)- \tilde u_\Delta
  (t,\cdot) \right\|_{\L^1(\R)}\,.
$$
Notice that $e_\Delta(0)=0$ and that $e_\Delta(t) \ge \left\| u_\Delta
  (t,\cdot)- \tilde u_\Delta (t,\cdot)\right\|_{\L^1(\R_-)}$\,.

\smallskip On each time interval $[t_n,t_{n+1})$ the coefficient is
constant in time and the assumptions of Proposition~\ref{Prop:KlaRis}
are satisfied.  Hence, from \eqref{estim:KlaRis}, we have the
following estimate
\begin{eqnarray}\nonumber
  &&  \hspace{-1cm} e_\Delta(t_{n+1}) - e_\Delta(t_n)~\le~ \Delta t~
  \Hat C_1  \tv_{\R}\left\{k_{\Delta t}(t_n,\cdot)- \tilde k_{\Delta t}(t_n,\cdot)\right\}\\
  && +~ \Delta t~ \Hat C_2 \left(1 + \tv_{\R_-} u(t_n,\cdot) +  
    \tv_{\R_-}\tilde u(t_n,\cdot) \right)
  \left\| k_{\Delta t}(t_n,\cdot)- \tilde k_{\Delta t} 
    (t_n,\cdot)\right\|_{\L^\infty(\R)}
\label{err1}
\end{eqnarray}
for some constants $\Hat C_1$ and $\Hat C_2$ that are uniform on
$[0,T]$.  Notice that, in the above lines, $\tv_{\R}\left\{k_{\Delta
    t}- \tilde k_{\Delta t}\right\}$ coincides with $\tv_{\R_-}$ of
the same quantity and, similarly, the $\L^\infty$-norm on $\R$
coincides with the $\L^\infty$-norm on $\R_-$. Concerning $\tv_{\R} u$
(similarly for $\tv_{\R} \tilde u$), we replaced it with $\tv_{\R_-}
u$ with an error that is bounded and possibly depending on $T$.

\smallskip
Summing up \eqref{err1} in $n$, we get
\begin{eqnarray*} 
&&  \hspace{-1cm} 
e_\Delta(t_N)-e_\Delta(0) ~=~  \sum_{n=0}^{N-1} e_\Delta(t_{n+1}) - e_\Delta(t_n)
~\le~  t_N \Hat C_1 \sup_{t\in[0,t_N]}\tv_{\R_-}\left\{k_{\Delta t}- \tilde k_{\Delta t} \right\}  
\\
&&\qquad\qquad   +~ t_N \Hat C_2~ 
\left(1 + \sup_t \tv_{\R_-} u(t,\cdot) +  \sup_t\tv_{\R_-}\tilde u(t,\cdot) \right)
\left\| k_{\Delta t}- \tilde k_{\Delta t} \right\|_{\L^\infty([0,t_N]\times\R_-)}\,.\\
\end{eqnarray*}
Now taking the limit $\Delta t \rightarrow 0$, we get
\eqref{dep_on_coefficients}, completing the proof.
\end{proof}

\section{Well-posedness of the integro-differential equation}
\label{Sec:3}
\setcounter{equation}{0}

In this section we prove the main Theorem~\ref{th:1}. 
In Subsection~\ref{subsec:integral-eq} we define a family of approximate
solutions to \eqref{eq1}--\eqref{eq:init} and show their compactness,
locally in time. Then in Subsection~\ref{subsec3.1} 
we show that the limit solution can be prolonged
beyond the existence time, by improving the estimates on upper and
lower bound for the exact solution of \eqref{eq1}--\eqref{eq:init}.
Finally, in Subsection~\ref{Subsec:3} we show that the flow generated
by the integro-differential equation (\ref{eq1}) is Lipschitz
continuous, restricted to any domain $\D$ given at \eqref{def:calD}.

\subsection{Local in time existence of BV solutions}
\label{subsec:integral-eq}

In this Subsection we prove the following existence theorem.
\begin{theorem}\label{th:4}
  Let $C_0$, $\kappa_0$ be given constants and let $\bar q(x)\in
  \L^1(\R_-)\cap BV(\R_-)$ such that
\begin{itemize}
\item[(a)] $\inf_{x<0}\, \bar q(x) ~\geq~ \kappa_0>-1$\,;
\item[(b)] $\tv\{\bar q(\cdot)\}\leq C_0$\,;
\item[(c)] $\|\bar q\|_{\L^1(\R_-)}\leq C_0$\,.
\end{itemize}
Then there exist $T>0$, $\kappa_1>-1$ and $C_1>0$ such that
\begin{eqnarray}\label{the-problem}
\left\{
\begin{array}{l}
q_t+\left( 
\exp \left\{ \int_x^0  f(q(t,\xi))\,d\xi   \right\}
\, f(q)  \right)_x =0\,,  \\[2mm]
  q(0,x)=  \bar q(x)\,,
\end{array}
\right.
\end{eqnarray}
admits an entropy weak solution $q(t,x)$ on $[0,T]\times\R_-$ that
satisfies
\begin{itemize}
\item[(a)'] $~\inf_{x<0}\, q(t,x) ~\geq~ \kappa_1>-1$\,;
\item[(b)'] $~\tv\{q(t,\cdot)\}\leq C_1$\,;
\item[(c)'] $~\|q(t,\cdot)\|_{\L^1(\R_-)}\leq\|\bar  q\|_{\L^1(\R_-)}$\,.
\end{itemize}
\end{theorem}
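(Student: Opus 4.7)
The plan is to construct approximate solutions $q^\Delta$ by a fractional step scheme that freezes the nonlocal coefficient at each time step, and then to extract a convergent subsequence. Fix $\Delta t>0$, set $t_n = n\Delta t$, and define $q^\Delta(0,\cdot)=\bar q$. Having built $q^\Delta$ up to time $t_n$, compute
\[
k_n(x) \doteq K\bigl(q^\Delta(t_n,\cdot)\bigr)(x) = \exp\Bigl\{\int_x^0 f(q^\Delta(t_n,\xi))\,d\xi\Bigr\},
\]
and on $[t_n,t_{n+1})$ let $q^\Delta$ evolve by the semigroup of Theorem~\ref{properties_semigroup_k_fixed} applied to $q_t+(k_n(x)f(q))_x=0$ with data $q^\Delta(t_n,\cdot)$. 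By Proposition~\ref{properties_of_k} in the Appendix, as long as $q^\Delta(t_n,\cdot)\in\D_{C,\kappa}$, the coefficient $k_n$ satisfies \textbf{(K)} with constants depending only on $C$ and $\kappa$, so this step is well-posed.

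Next I would derive a-priori bounds that are uniform in $\Delta t$. Property (c)' is immediate: comparing with the zero solution via the $L^1$-contraction \eqref{L1-contraction} on each slab yields $\|q^\Delta(t,\cdot)\|_{\L^1(\R_-)}\le \|\bar q\|_{\L^1(\R_-)}$. For the lower bound (a)', apply \eqref{eq:bound-total-flux} on each slab: the complete flux $F^\Delta = k_n\,f(q^\Delta)$ stays bounded by $C_1 e^{L_2 t}$ with constants controlled uniformly provided $q^\Delta(t_n,\cdot)$ remains in a fixed domain. Since $f(q)\to-\infty$ as $q\to-1$, an upper bound on $|F^\Delta|$ forces $q^\Delta\ge\kappa_1>-1$. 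For (b)', iterate \eqref{bound-on-tv-u} together with a Grönwall estimate that tracks the jump in $F^\Delta$ at each $t_n$ caused by updating $k_{n-1}\to k_n$; this jump is $O(\Delta t)$ thanks to the Lipschitz-in-time property of $K$ proved in the Appendix.

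A continuity/bootstrap argument fixes the existence time $T$: choose $C_1$, $\kappa_1$ generously relative to $C_0$, $\kappa_0$, and then pick $T>0$ small enough that the exponential estimates close, so that $q^\Delta(t,\cdot)\in\D_{C_1,\kappa_1}$ throughout $[0,T]$ uniformly in $\Delta t$. Hence the scheme is well-defined on $[0,T]$ and the bounds (a)'--(c)' propagate.

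Finally pass to the limit $\Delta t\to 0$. The uniform BV bound in $x$ and the uniform $L^1$-Lipschitz-in-$t$ property (inherited from the semigroup, with Lipschitz constant essentially $\|F^\Delta\|_\infty$) let Helly's theorem produce a subsequence $q^{\Delta_j}\to q$ in $L^1_{\rm loc}([0,T]\times\R_-)$, with $q(t,\cdot)\in\D_{C_1,\kappa_1}$. The delicate step, which I expect to be the main obstacle, is identifying the coefficient in the limit: one must show that the piecewise-in-time frozen coefficient $k^\Delta(t,x)=K(q^\Delta(t_n,\cdot))(x)$ converges uniformly to $K(q(t,\cdot))(x)$. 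Here I would invoke the Lipschitz continuity of $q\mapsto K(q)$ from the Appendix together with the $L^1$ convergence of $q^\Delta$ and the smallness of $|t-t_n|\le\Delta t$ (controlled via the time-Lipschitz estimate on $K$). Once this identification is made, passing to the limit in the Kružkov inequalities \eqref{kruzkov} shows that $q$ is an entropy weak solution of the integro-differential equation, completing the proof.
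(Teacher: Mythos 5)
Your overall strategy is the paper's own: freeze the nonlocal coefficient at each $t_n$, run the fixed-coefficient theory of Theorem~\ref{properties_semigroup_k_fixed} and Proposition~\ref{properties_of_k} slab by slab, prove bounds uniform in $\Delta t$, extract a limit by Helly-type compactness, and identify the limiting coefficient as $K(q(t,\cdot))$ via the Lipschitz continuity of $K$; the $\L^1$ bound, the compactness step, and the limit identification all match the paper. One sub-argument differs: the paper does \emph{not} obtain (b)' by iterating \eqref{bound-on-tv-u} across slabs. It uses a translation argument — $q(t,\cdot-h)$ solves the equation with shifted coefficient $k_n(\cdot-h)$, and the Klausen--Risebro estimate \eqref{estim:KlaRis} controls the growth of $\frac1h\|q(t,\cdot)-q(t,\cdot-h)\|_{\L^1}$ — yielding the linear Gronwall inequality \eqref{bound_TV}. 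Your route is workable but self-referential: the jump of $F^\Delta$ at $t_n$ (cf.\ \eqref{tilde-f-fract-step}) has size governed by the $\L^1$-in-time Lipschitz constant of $q^\Delta$, which is itself of the order of $\tv\{F^\Delta\}$, so your Gronwall inequality is effectively quadratic and the induction on all quantities must be run simultaneously; this still closes on a small time interval, which suffices here, but it should be made explicit.

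The genuine gap is the upper bound on $q^\Delta$. The flux bound \eqref{eq:bound-total-flux} forces the lower bound, as you say, because $f\to-\infty$ at $-1$; but it controls $\sup q^\Delta$ only when $f$ is unbounded above. The standing assumptions \eqref{eq:f} allow $f$ bounded — indeed the motivating erosion function \eqref{fex}, $f(q)=q/(q+1)$, is bounded — and then a bound on $|F^\Delta|=|k_n f(q^\Delta)|$ is compatible with $\sup q^\Delta$ arbitrarily large. This upper bound is not cosmetic: your own TV step needs it, since converting $\tv\{F^\Delta\}$ into $\tv\{q^\Delta\}$ through \eqref{bound-on-tv-u} costs a factor $1/\inf f' = 1/f'(\sup q^\Delta)$, which blows up as $\sup q^\Delta\to\infty$ (by \eqref{eq:f}, $f'\to 0$ at $+\infty$). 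Nor can you recover the sup bound from the TV bound (valid for $\L^1\cap BV$ functions on the half-line, cf.\ \eqref{apriori_q_infty}) without circularity, since in your scheme TV needs sup and sup would need TV. The paper breaks this circle with a separate argument placed \emph{before} the TV estimate: along backward characteristics one has $dq/dt=-k_x f(q)$, see \eqref{eq:u_across_chars}, whose right-hand side is bounded once the lower and $\L^1$ bounds hold (these need neither TV nor sup bounds), so $\sup q$ grows at most linearly in time, cf.\ \eqref{eqW}. You need to insert this characteristic argument (or an equivalent) into your bootstrap; as written, the bounds you propagate do not close.
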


\begin{proof}
  We define a sequence of approximate solution to the scalar
  equation~\eqref{eq1}--\eqref{K}.  We fix $\Delta t>0$ and set
  $t_n=n\Delta t$, $n\in\N$.  The approximation is generated
  recursively, as $n$ starts from 0 and increases by 1 after each
  step.
%
  For each step with $n\ge 0$, let $q(t,x)$ be defined on
  $[0,t_n)\times \R_-$ and set
\begin{eqnarray*}
  k_n(x) &\dot =& \exp \left\{ \int_x^0
    f(q(t_n,\xi))\,d\xi \right\} \,.
\end{eqnarray*}
Then we define $q$ on $[t_n, t_{n+1})\times \R_-$ as the solution of
the problem
\begin{eqnarray*}
\left\{
\begin{array}{ll}
  q_t+\left( k_n(x)\, f(q) \right)_x
  =0\,,&\qquad t\in[t_n, t_{n+1})\\[1mm]
  q(t_n,x)=  q(t_n -,x)
  \,.&
\end{array}\right.
\end{eqnarray*}
This procedure leads to a solution operator $t\mapsto S^{\Delta
  t}_t\bar q = q^{\Delta t}(t,\cdot)$, defined up to a certain time
$T=T(\Delta t,\bar q)>0$, of the problem
\begin{eqnarray}\label{eq1-Deltat}
\left\{
\begin{array}{ll}
q_t+\left( k^{\Delta t}(t,x)\, f(q)
\right)_x
=0\,,  &\qquad t>0\\[1mm]
q(0,x)= \bar q(x)\,,&
\end{array}\right.
\end{eqnarray}
where $k=k^{\Delta t}$ is defined by
\begin{eqnarray}\label{k-Deltat}
  k^{\Delta t}(t,x) &=& \sum_{n\ge 0} \, \chi_{[t_n, t_{n+1})}(t) \cdot k_n(x)\,. 
\end{eqnarray}
Notice that the operator $S^{\Delta t}_t$ has the semigroup property
$S^{\Delta t}_{\tau_1+\tau_2} =S^{\Delta t}_{\tau_1} S^{\Delta
  t}_{\tau_2}$ for $\tau_1$, $\tau_2\in (\Delta t) \N$.  Now we prove
uniform bounds, independent of $\Delta t$, on the family of
approximate solutions.

\paragraph{The $L^1$ bound.} This follows by the application of
\eqref{L1-contraction} in Theorem~\ref{properties_semigroup_k_fixed},
at each time step $[t_n,t_{n+1})$, and the fact that $t\mapsto
q(t,\cdot)$ is continuous in $\L^1$. Until the solution is defined, we
have
\begin{eqnarray}\label{}
\| q(t,\cdot)\|_{\L^1} &\le& \| q(0,\cdot)\|_{\L^1} \,.
\end{eqnarray}

\paragraph{Lower and upper bound on $q$.} 
Define 
$$z(t)~=~ \inf_x\, q(t,x)\,, \qquad 
w(t)~=~ \sup_x\, q(t,x)\,.
$$
We observe that, by comparison with the equilibrium solution $u\equiv 0$, 
(i) if $z(0) \ge 0$ then $z(t)\ge 0$;
and (ii) if $w(0) \le 0$ then $w(t) \le 0$ for all $t>0$. 

Now consider $-1 < z(0) < 0$ and $w(t) >0$.  Choose $\delta$ and $M$
such that $z(0)\ge -1 + 2\delta$ and $w(0) \le M/2$.  For example, one
can take $\delta=(\kappa_0+1)/2$ and $M= 2 w(0)$.  Let
$T=T(\delta,M)>0$ be the first time that one of the following bounds
fails,
\begin{equation}\label{claim}
z(t) \ge  -1 + \delta\,, \qquad w(t) \le M\,.
\end{equation}

Then, for $t\le T$, {from} the analysis of equation \eqref{eq1-Deltat}
(see (\ref{eq:u_across_chars})), we find that $z$ and $w$ are
continuous and satisfy
\begin{eqnarray}
  z(t) &\ge& z(0) + \sup_{x}{\left|k^{\Delta t}_x(t,x)\right|}
  \int_{0}^t f(z(\tau))\,d\tau\,, \qquad z<0\,, 
 \label{eq-for-z}\\
 w(t) & \le & w(0) + \sup_{x}{\left|k^{\Delta t}_x(t,x)\right| }
  \int_{0}^t f(w(\tau))\,d\tau\,, \qquad w>0\,. 
  \label{eqW}
\end{eqnarray}
Note that in (\ref{eq-for-z}) we have $f(z)\le0$, and in (\ref{eqW}) we have $f(w)\ge 0$. 
For $\left|k^{\Delta t}_x\right|$, we have the estimate
\begin{eqnarray}
  \left|k^{\Delta t}_x(t,x) \right|
  &=&  \left|k^{\Delta t}(x)f(q(t_n, x))\right|
  ~\le~  \exp\left\{\int_x^0 \left| f(q(t_n,\xi))\right| \, d\xi \right\}
  f(M) \nonumber \\
  &\le & f(M) \exp\{f'(-1+\delta) \left\| \bar q\right\|_{\L^1} \} ~\le~ C(\delta,M)\,.
  \nonumber
\end{eqnarray}
This gives us
\begin{eqnarray}
  z(t) &\ge& z(0) + C(\delta,M) \int_{0}^t f(z(\tau))\,d\tau 
        ~ \ge~ z(0) - C(\delta,M) t \left|f'(-1+\delta)\right| \,, 
\nonumber
\\
  w(t) &\le & w(0) + C(\delta,M) \int_{0}^t f(w(\tau))\,d\tau
        ~ \le ~ w(0) + C(\delta,M) t f(M)\,.
\nonumber
\end{eqnarray}
We conclude that the bounds in (\ref{claim}) hold for $t\le T$ with 
\begin{equation}\nonumber
T(\delta,M) = \min\{T_1,T_2\}\,,
\end{equation}
where
\begin{equation}\nonumber
T_1(\delta,M) = \frac{\delta}{C(\delta,M) \left|f'(-1+\delta)\right| }\,,
\qquad
T_2 (\delta,M) = \frac{M/2}{C(\delta,M) f(M)}\,,
\end{equation}
yielding the lower and upper bounds. 

Finally, if $z(0)\ge0$ and $ w(0)>0$, or if $z(0)<0$ and $w(0)\le0$,
then we would only need to establish one of the bounds in
(\ref{claim}), and the result follows.

\paragraph{Bounds on $f,f',k$.} 
Once we have a lower, upper bound on $q$ and the bound on
$\|q\|_{\L^1}$, we immediately find that
\begin{eqnarray}\label{bounds-on-f-fprime}
f(q(t,x))\,,\quad f'(q(t,x))\,,\quad \int_x^0 f(q(t,\xi))\,d\xi~~~ \in~~~
\L^\infty\left([0,T]\times \R_- \right)
\end{eqnarray}
uniformly w.r.t. $\Delta t$. By definition \eqref{k-Deltat} of $k$, we
can easily verify that the following properties hold uniformly
w.r.t. $\Delta t$:

\begin{itemize}
\item[(i)] $k\in\L^\infty\left([0,T]\times \R_-\right)$,  $\inf_{t,x} k >0$;
\item[(ii)] $k_x \in \L^\infty\left([0,T]\times \R_-\right)$\,;
\item[(iii)] $\tv k(t,\cdot)$ is bounded uniformly in time\,.
\end{itemize}
Indeed, (i) follows from the definition of $k$ and
\eqref{bounds-on-f-fprime}. About (ii), at each time $t$ we have
$k(t,\cdot) = k_n(\cdot)$ for some $n$, and $k_x= -k_n
f(q(t_n,\cdot))$. Then $k_x \in \L^\infty$ because of (i) and
\eqref{bounds-on-f-fprime}. Finally
\begin{eqnarray*}
  \tv k(t,\cdot)&=&\| k_x \|_{\L^1} ~=~ \| k_n f(q(t_n,\cdot))
  \|_{\L^1}
  ~\le~ M\,\|k\|_\infty\, \|q(t_n,\cdot)\|_{\L^1}
  ~\le~ M\,\|k\|_\infty \|\bar q\|_{\L^1}
\end{eqnarray*}
where $M= \sup f'$, that depends on the lower bound on $q$.

\smallskip Lastly, from (i) and \eqref{bounds-on-f-fprime} one obtains
a uniform bound on the characteristic speed $kf'(q)$.

\paragraph{Bound on the total variation of $q$.}
By definition of the total variation 
\begin{eqnarray*}
  \tv \{q(t,\cdot)\}
  &\dot=& \lim_{h\to 0+} \frac 1{h} \int_{-\infty}^{0} |q(t,x) - q(t,x-h)|\,dx \,,
\end{eqnarray*}
we have, for  any $h>0$
\begin{eqnarray}\label{L1-variation_vs_BV}
\frac 1{h} \int_{-\infty}^{0} |q(t,x) - q(t,x-h)|\,dx&\le&\tv \{q(t,\cdot)\}\,.
\end{eqnarray}

The total variation of $q$ does not change at time $t_n$ when $k$ is updated. 
Now consider a time interval $t \in [t_n,t_{n+1})$, and we estimate the change of the 
total variation of $q$ in this time interval.
We have 
\begin{eqnarray}
  \int_{-\infty}^{0} |q(t_{n+1},x) - q(t_{n+1},x-h)|\,dx
  &\le&\int_{-\infty}^{0} |q(t_n,x) - q(t_n,x-h)|\,dx \nonumber \\
&&   + \int_{t_n}^{t_{n+1}} \mathcal{E}(\tau)\,d\tau \label{L1-variation}
\end{eqnarray}
where 
\begin{eqnarray*}
  \mathcal{E}(\tau)  &=&  \limsup_{\theta\to 0+}
  \frac{\int_{-\infty}^0 \left| q(\tau+\theta,x-h) - \hat q(\tau+\theta,x) \right| \, dx 
  }{\theta}\,.
\end{eqnarray*}
Here  $\hat q$ is the entropy
solution to
\begin{eqnarray*} 
\left\{
\begin{array}{ll}
u_t + (k_n(x)f(u))_x=0\,,&\ \ t\ge \tau\,,\ x<0\\
u(\tau,x) = q(\tau, x-h)\,.
\end{array}
  \right.
\end{eqnarray*}
On the other hand, $q(t,x-h)$ is a solution of
\begin{eqnarray*} 
\left\{
\begin{array}{ll}
u_t + (k_n(x-h)f(u))_x=0\,,&\ \ t\ge \tau\,,\ x<0\\
u(\tau,x) = q(\tau, x-h)\,.
\end{array}
  \right.
\end{eqnarray*}
Using the estimate \eqref{estim:KlaRis}
we find
\begin{eqnarray*} 
\mathcal{E}(\tau)&\le& \|f\|_\infty \tv\{ k_n(\cdot-h) - k_n(\cdot) \}
~+~ C \left(1+ \tv \{q(\tau,\cdot)\}\right) \|k_n(\cdot-h) - k_n(\cdot)\|_\infty
\end{eqnarray*}
for a suitable constant $C$. Notice that
\begin{eqnarray*} 
  \left|k_n(x-h) - k_n(x)\right|&=& \left|\int_{x-h}^{x}(k_n) _x(\tau,y)\,dy \right|
  ~\le~  h \|k_nf\|_\infty
\end{eqnarray*}
and that
\begin{eqnarray}
  && \hspace{-1cm}
  \tv\{ k_n(\cdot-h) -k_n(\cdot)\} 
~=~  \left\| (k_n)_x(\cdot-h)- (k_n)_x(\cdot)\right\|_{\L^1} 
  \nonumber\\[1mm]
  &\le & 
  \left\|\left(k_n(\cdot-h)- k_n(\cdot)\right)\,f\left(q(\tau,\cdot)\right)
  \right\|_{\L^1}
  + \left\| k_n(\cdot-h) \cdot
    \left(f\left(q(\tau,\cdot-h)\right)- f\left(q(\tau,\cdot)\right)
    \right) \right\|_{\L^1}
  \nonumber\\[2mm]
  &\le& 
  h \|k_n f\|_\infty \cdot \left\|
      f\left(q(\tau,\cdot)\right)\right\|_{\L^1}
    ~+~ \left\| k_n\right\|_{\L^\infty} \|f'\|_\infty\,
    \left\|q(\tau,\cdot)- q(\tau,\cdot-h)\right\|_{\L^1}\,.\nonumber 
\end{eqnarray}
In conclusion, using also \eqref{L1-variation_vs_BV}, we obtain
\begin{eqnarray*}
  \mathcal{E}(\tau)  &\le& h \left\{M_1 + M_2 \tv \{q(\tau,\cdot) \}
    + M_3 \frac 1{h} \left\|q(\tau,\cdot)- q(\tau,\cdot-h)\right\|_{\L^1} \right\}\\
  &\le&  h \left\{M_1 + \left( M_2 + M_3\right) \tv \{q(\tau,\cdot) \}  \right\}
\end{eqnarray*}
where $M_i$ depend only on a-priori bounded quantities. Now from
\eqref{L1-variation} we obtain
\begin{eqnarray}\label{bound_TV}
  \tv \{q(t_{n+1},\cdot)\}   &\le&  \tv \{q(t_n,\cdot)\} ~+~ \int_{t_n}^{t_{n+1}} 
  \Big[ M_1 + \left(M_2 + M_3\right) \tv \{q(\tau,\cdot)\} \Big]\,d\tau\,.
\end{eqnarray}
We conclude that the total variation of $q$ may grow
exponentially in $t$ on each interval $(t_n,t_{n+1})$,  
but it remains bounded for any bounded time $t$.

\paragraph{Convergence to weak solutions; Existence of BV solutions.}
Now, without causing confusion, we will use $q^\Delta(t,x)$ for the
approximate solution, where $\Delta=\Delta t$ is the step size.  Let
$k^\Delta$ be the approximated coefficient of the equation, defined in
\eqref{k-Deltat}.

By compactness, there exists a subsequence of $\{q^\Delta(t,x)\}$, as
$\Delta \rightarrow 0$, that converges to a limit function $q(t,x)$ in
$\L^1_{loc}$. Let $k(t,x)$ be the integral term, \eqref{k},
corresponding to $q$, which is uniformly bounded as well as the
$k^\Delta$. We have
\begin{eqnarray*}
  k^\Delta(t,x)-k(t,x)&=&\O(1) 
  \left\{\int_x^0f(q^\Delta(t_n,\xi))\,d\xi - \int_x^0f(q(t,\xi))\,d\xi \right\}\\
  &=&\O(1) 
  \left\{
    \sup_\tau \tv\{ f(q^\Delta(\tau,\cdot)) \}\sup \dot x \, \Delta
    + 
    \int_x^0[f(q^\Delta(t,\xi))-f(q(t,\xi))]\,d\xi 
  \right\}
\end{eqnarray*}
that vanishes as $\Delta \to 0$.  Therefore we can pass to the limit
in the weak formulation. On the interval $[t_n,t_{n+1}]$,
$q^\Delta(t,x)$ satisfies
$$
\int_{t_n}^{t_{n+1}} \int_{-\infty}^0 
(q^\Delta \phi_t + k^\Delta f(q^\Delta) \phi_x)\, dx \, dt 
= \int_{-\infty}^0 \left[ q^\Delta \phi(t_{n+1},x) - q^\Delta \phi(t_n,x) \right] \, dx 
$$
for some test function $\phi$ with compact support inside $[0,T]\times \R_- $. 
Summing this up over $n$, we get
\begin{eqnarray}\label{weak-form-Delta}
\int_0^T 
\int_{-\infty}^0 
(q^\Delta \phi_t + k^\Delta f(q^\Delta) \phi_x)\, dx \, dt 
&=&
\int_{-\infty}^0 \left[ q^\Delta \phi(T,x) - q^\Delta \phi(0,x) \right] \, dx\,.
\end{eqnarray}
Since $q^\Delta\to q$ in $\L^1_{loc}$, $f(q^\Delta)\to f(q)$ in
$\L^1_{loc}$, $k^\Delta\to k$ pointwise and $k^\Delta$, $k$ are
uniformly bounded, by dominated convergence we can take the limit as
$\Delta\to 0$ and have the convergence of \eqref{weak-form-Delta} to
\begin{eqnarray*}
\int_0^T \int_{-\infty}^0 
\left[q(t,x) \phi_t(t,x) + k(t,x) f(q(t,x)) \phi_x(t,x)\right]\, dx \, dt 
&=&
\int_{-\infty}^0 \left[ q \phi(T,x) - q \phi(0,x) \right] \, dx\,.
\end{eqnarray*}
This completes the proof of existence of BV solutions for (\ref{eq1}).
\end{proof}

\subsection{Global existence of BV solutions}
\label{subsec3.1}

Once the BV solutions exist locally in time, we can further show that
they enjoy better properties than the ones deduced from the
approximate solutions.  In particular we show that the lower and upper
bounds on $q$ do not depend on time $t$, leading to global in time
existence of BV solutions.

Let $q$ be an entropy weak solution of \eqref{the-problem} on $[0,T]\times\R_-$.
We will now improve the needed bounds. 

\paragraph{Lower  bound on $q$.}
Given any point $(\bar t,
\bar x)\in (0,T)\times \R_-$, let $t\to x(t)$ be the minimal backward
characteristic (which is classical), defined for $t\in[0,\bar t]$. By
setting $q(t) = q(t,x(t))$, we have
\begin{eqnarray}
\left\{
\begin{array}{rcl}
  x'(t)&=&k(t,x) f'(q(t))\,,\\[2mm]
  q'(t) &=&   -k_x(t,x(t)) f(q) = k f(q)^2 \ge 0\,,
\end{array}
\right.
&&
\begin{array}{l}
x(\bar t)=\bar x\,,\\[2mm]
q(\bar t) = q(\bar t, \bar x-)\,.
\end{array}
\label{eq:q-along-chars}  
\end{eqnarray}
We see that the solution $q$ is non-decreasing along any
characteristics.  Therefore, we have $\inf_x q(t,x)\ge \inf \bar q(x)
\ge \kappa_0 > -1$ for all $t\ge0$.

\paragraph{Upper bound on $q$.}
Again, consider a point $(\bar t, \bar x)$ and let $t\to x(t)$ be 
the minimal backward characteristic through it.
{From} the second equation in (\ref{eq:q-along-chars}) we see that if $q(0,t(0)) \le 0$, then 
$q\to 0$ as $t\to+\infty$. 
Now consider $q(0,x(0)) >0$, and we have $q(t,x(t))\ge 0$ for all $t$. 
Define 
\begin{equation}\label{W}
 W(t,x) = \int_{-\infty}^x|q(t,y)|\,dy 
\,, \qquad x<0\,,
\end{equation}
that satisfies
$$
0~\le~ W(t,x) ~\le~ \|q(0,\cdot)\|_{L^1(\R_-)} \,.
$$
Using (\ref{kruzkov}) with $\alpha=1$, we have 
$$
W_t ~=~ \int_{-\infty}^x|q(t,y)|_t \,dy 
~\le ~ -\int_{-\infty}^x \Big( k(t,x)\, |f(q)| \Big)_x \,dy 
~=~ - k |f(q)|\,.
$$
The variation of $W$ along the characteristic is 
\begin{eqnarray}
  \frac{d}{dt} W(t,x(t))  &=&    W_t + x' W_x
  ~\le~ -k|f| +  |q| k f' ~=~ 
  k\left( -|f| + |q|f'(q) \right)
\nonumber \\
  &= & k\left( -f + q f'(q) \right)
  = - f^2 k \frac{f - qf'(q)}{f^2} 
  = - \frac{d}{dt} \left( \frac{q(t,x(t))}{f(q(t,x(t)))}\right) 
  \,.
  \label{W-along-chars}
\end{eqnarray}
Here we remove the absolute value signs because $q>0$. 
Then, (\ref{W-along-chars}) implies that 
$$
W(t,x(t)) + \frac{q(t,x(t))}{f(q(t,x(t)))} \equiv C
$$
along characteristics. This gives the bound
\begin{equation}\label{qBound}
\frac{q(t,x(t))}{f(q(t,x(t)))} 
= \frac{q(0,x(0))}{f(q(0,x(0)))} + W(0,x(0)) - W(t,x(t)) \le C_1\,,
\end{equation}
where $C_1$ can be chosen independently of $(\bar t,\bar
x)$. Recalling \eqref{eq:f}, we have
$$ 
\lim_{q\to +\infty} \frac{q}{f(q)} = +\infty\,.
$$
Therefore, (\ref{qBound}) implies an upper bound for $q$ for all $t$.
The uniform bound on the total variation follows because the constants
$M_i$ in \eqref{bound_TV} are now bounded uniformly in time.

\subsection{Continuous dependence from the data for the
  integro-differential equation}
\label{Subsec:3}

In this section we prove the last part of Theorem~\ref{th:1}, showing
that the flow generated by the integro-differential equation
(\ref{eq1}) is Lipschitz continuous, restricted to any domain
$\D\subset\L^1(\R_-)$ of functions $q(\cdot)$ satisfying the following
uniform bounds in \eqref{def:calD}, for some constants $C_0$,
$\kappa_0$.

Consider two solutions $q_1(t,\cdot)$, $q_2(t,\cdot)$ of the
integro-differential equation (\ref{eq1}), say with
initial data
$$
q_1(0,x)= \bar q_1(x)\,,\qquad\qquad q_2(0,x) =\bar q_2(x)\qquad\qquad
x<0\,,
$$
and satisfying the conditions in (\ref{def:calD}) for $t\in[0,T]$. We
are going to prove that
\bel{7.9} \|q_1(t,\cdot)- q_2(t,\cdot)\|_{\L^1(\R_-)}
\leq \|\bar q_1-\bar q_2\|_{\L^1(\R_-)} + L \cdot \int_0^t
\|q_1(s,\cdot)- q_2(s,\cdot)\|_{\L^1(\R_-)}\,ds\,, 
\eeq
for a suitable constant $L$. By Gronwall lemma, this yields
\eqref{continous-dep-on-init-data}, hence the Lipschitz continuous
dependence of solutions of (\ref{eq1}) on the initial data.

Define the functions $k_1(t,x)$, $k_2(t,x)$ as in \eqref{k},
corresponding to $q_1(t,x)$, $q_2(t,x)$ respectively.
Now set
$$
k^\theta(t,x)\doteq
\left\{ \begin{array}{l}
k_1(t,x)\quad\hbox{if}~~t\in [0,\theta]\,, \\
[2mm]
k_2(t,x)\quad\hbox{if}~~t>\theta\,.
\end{array}\right.
$$
Finally, for any given $\theta\in [0,T]$, let $q^\theta=q^\theta
(t,x)$ be the solution of the conservation law
\begin{equation}\label{7.5} q_t + \left( k^\theta(t,x)\,
f(q)\right)_x=0\,,\qquad
q^\theta(0,x)= \bar q_2(x)\,.
\end{equation}
Observe that, for each fixed $\theta$, the distance between any two
entropy-admissible solutions of the conservation law (\ref{7.5}) is
non-increasing in time. In particular, for $\theta=T$, call $\hat q$
the solution of
$$
q_t + \left(k_1(t,x)\,f(q)\right)_x=0\,,
$$
with initial data $\hat q(0,x)= \bar q_2(x)$ (see Figure \ref{figL}).
We have
\bel{7.6}
\left\|q_1(t,\cdot)-\hat q(t,\cdot)\right\|_{\L^1(\R_-)}
~\leq~\left\|\bar q_1-\bar q_2\right\|_{\L^1(\R_-)}
\qquad\qquad\forall t\in \left[0,T\right]\,.
\eeq

\begin{figure}[htb]
\centerline{\includegraphics[width=10cm]{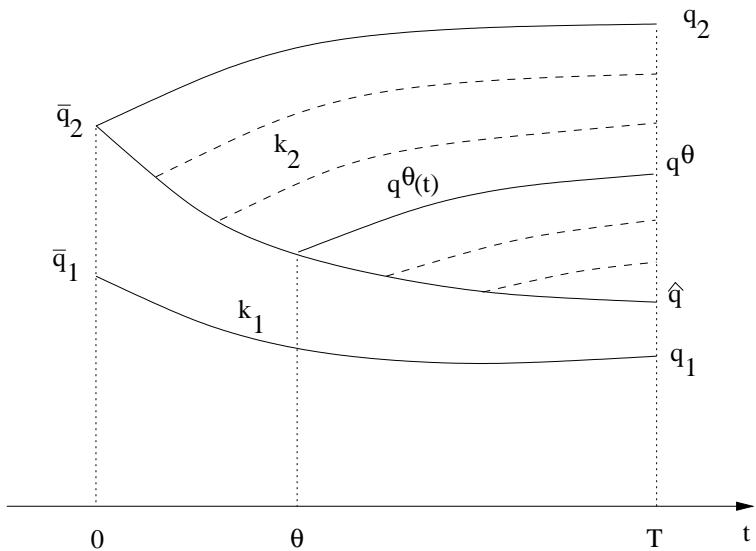}}
  \caption{The flow of solutions $q_1,\hat q, q^\theta, q_2$
           for the integro-differential equation.}
  \label{figL}
\end{figure}

Moreover we can use the Lipschitz property of the solution operator
for (\ref{eq1}) with $k=k_2$ fixed, and get the
distance estimate
\bel{7.7} \left\|\hat q(T,\cdot)- q_2(T,\cdot)\right\|_{\L^1(\R_-)}
~\leq~ \int_0^T E(\tau)\,d\tau\,,
\eeq
where
$$
E(\tau)  \doteq  \limsup_{h\to 0+}
\frac{\left\| q^{\tau}(\tau+h,\cdot)-\hat q(\tau+h,\cdot)\right\|_{\L^1}}{h}\,.
$$
Indeed, observe that $\hat q(\tau,\cdot) =q^\theta(\tau,\cdot)$
whenever $\tau\leq\theta$, for any $\tau\in [0,T]$.

To compute the integrand in (\ref{7.7}), observe that the functions
$h\mapsto q^{\tau}(\tau+h,\cdot)$ and $h\mapsto \hat q(\tau+h,\cdot)$
take the same value $\hat q(\tau,\cdot)$ when $h=0$, and $h\mapsto
q^{\tau}(\tau+h,x)$ satisfies the conservation law
\bel{7.8}
q_h + \left(  k_2(\tau+h,x)\,f(q) \right)_x=0\,,
\eeq
while $h\mapsto \hat q(\tau+h,x)$ solves
\bel{7.8b} q_h + \left( k_1(\tau+h,x)\,f(q)\right)_x=0\,,
\eeq
for $h\ge 0$.  By using \eqref{dep_on_coefficients} in
Theorem~\ref{3}, we can measure the error term $E(\tau)$.  By the
facts that $\left\|q^\tau (\tau,\cdot)\right\|_{\L^\infty}$,
$\left\|\hat q(\tau,\cdot)\right\|_{\L^\infty}$, $\tv
\{q^\tau(\tau,\cdot)\}$, $\tv \{\hat q(\tau,\cdot)\}$, $\tv
\{k_1(\tau,\cdot)\}$ and $\tv \{k_2(\tau,\cdot)\}$ are all bounded,
the coefficients $\Hat C_1$ and $\Hat C_2$ in
\eqref{dep_on_coefficients} are all bounded constants.  Let $M$ be a
generic bounded constant, we get
$$ 
\left\|q^\tau(\tau+h,\cdot)-\hat q(\tau+h,\cdot)\right\|_{\L^1} 
\le  M h \left[ 
\sup_{\tau\le t\le \tau+h} \tv( k_1(t,\cdot) -k_2(t,\cdot)) 
+  \left\|k_1 -k_2 \right\|_{\L^\infty([\tau,\tau+h]\times\R_-)} 
\right]\,.
$$
Therefore, we have
\begin{equation}\label{E0}
E(\tau)=
M\cdot \tv\{k_1(\tau,\cdot)- k_2(\tau,\cdot)\} 
+ M\cdot \left\|k_1(\tau,\cdot)
  - k_2(\tau,\cdot)\right\|_{\L^\infty}
\end{equation}
Recalling the definitions of $k_1$, $k_2$ we deduce that
\begin{eqnarray}
\left\|k_1(\tau,\cdot)- k_2(\tau,\cdot)\right\|_{\L^\infty} &=&
M \cdot \sup_{x<0} \left|\int_x^0
f\left(q_1(\tau,y)\right)
\,dy -
\int_x^0
f\left(q_2(\tau,y)\right)
\,dy \right|
\nonumber\\[2mm]
&=& M\cdot \|q_1(\tau,\cdot) - q_2(\tau,\cdot)\|_{\L^1}\,,\label{E1}
\end{eqnarray}
and, using also (\ref{E1}),
\begin{eqnarray}
&& \hspace{-1cm}
\tv\{ k_1(\tau,\cdot) -k_2(\tau,\cdot)\} 
~=~
\left\| (k_1)_x(\tau,\cdot)- (k_2)_x(\tau,\cdot)\right\|_{\L^1} 
\nonumber\\[1mm]
&=& 
\left\| k_1(\tau,\cdot) \,f\left(q_1(\tau,\cdot)\right)
- k_2(\tau,\cdot)\,f\left(q_2(\tau,\cdot)\right)\right\|_{\L^1}
 \nonumber\\[2mm]
&\le & 
\left\| (k_1(\tau,\cdot)- k_2(\tau,\cdot))\,f\left(q_1(\tau,\cdot)\right)
\right\|_{\L^1}
+ \left\|  k_2(\tau,\cdot) \cdot
\left(f\left(q_1(\tau,\cdot)\right)- f\left(q_2(\tau,\cdot)\right)
\right) \right\|_{\L^1}
\nonumber\\[2mm]
&=& 
\left\| k_1(\tau,\cdot)- k_2(\tau,\cdot)\right\|_{\L^\infty}\cdot \left\|
f\left(q_1(\tau,\cdot)\right)\right\|_{\L^1}
~+~ \left\| k_2(\tau,\cdot)\right\|_{\L^\infty}
\left\| q_1(\tau,\cdot)- q_2(\tau,\cdot)\right\|_{\L^1}\nonumber \\[2mm]
&=&
M \left\|q_1(\tau,\cdot)- q_2(\tau,\cdot) \right\|_{\L^1}\,.
\label{E2}
\end{eqnarray}

Putting the estimates \eqref{E1} and \eqref{E2} into \eqref{E0}, we get
$$
E(\tau)~\leq~ L\cdot \|q_1(\tau,\cdot)- q_2(\tau,\cdot)\|_{\L^1}
$$
for a suitable constant $L$.  Inserting this estimate in
(\ref{7.7}) and using (\ref{7.6}) one finally obtains (\ref{7.9}).

\appendix
\section{Properties of the integral operator}\label{app:propr_K}
\setcounter{equation}{0}

In this Appendix we prove some properties of the integral term $k$ in
terms of a Lipschitz flow $t\mapsto q(t,\cdot)$. The operator $K$, see
\eqref{K}, is defined on the set
\begin{eqnarray*}
\left\{q \in \L^1(\R_-)\cap BV(\R_-)\,;\quad \inf_{x<0}\, q(x) >-1 \right\}
\end{eqnarray*}
and valued in $Lip(\R_-)$. Its properties are summarized in the
following Proposition.
\begin{proposition}\label{properties_of_k}
  Let $C_0$, $\kappa_0$, $T$ be given positive constants. Assume that
  the map $q:[0,T]\to \D_{C_0,\kappa_0}$ is Lipschitz continuous as a
  function in $\L^1(\R_-)$.

  Define $k$ as in \eqref{k}. Then
\begin{eqnarray*}
  \mbox{{\bf (K)}}&&\left\{
\begin{array}{l}
  k(t,x): [0,T]\times \R_- \to \R_+ \mbox{ is
    bounded and Lipschitz continuous, with}\,\inf_{t,x} k >0\,;
  \\[2mm]
  \tv k(t,\cdot)\,,\ \tv k_x(t,\cdot) \mbox{ are bounded uniformly in time;}\\[2mm]
  [0,T]\ni t\to k_x(t,\cdot)\in{\L^1(\R_-)}\mbox{  is Lipschitz
    continuous.}
\end{array}
\right.
\end{eqnarray*}
\end{proposition}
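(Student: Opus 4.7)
\medskip\noindent\textbf{Proof proposal.}
The plan is to reduce every claim about $k$ and $k_x$ to pointwise bounds on $f(q)$, $f'(q)$ and norms of $q(t,\cdot)$ that are already under control in the domain $\D_{C_0,\kappa_0}$, and then use the Lipschitz dependence $t\mapsto q(t,\cdot)$ in $\L^1$ to propagate time regularity through the exponential. Throughout the argument, the identity $k_x=-f(q)k$ will do most of the work.

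First I would collect the uniform a-priori bounds. Since $q(t,\cdot)\in\D_{C_0,\kappa_0}$ belongs to $\L^1(\R_-)\cap BV(\R_-)$ with $q\geq\kappa_0>-1$, the function $q(t,\cdot)$ is uniformly bounded in $t$ and $x$ (a BV function on $\R_-$ which is $\L^1$ admits a limit at $-\infty$ which must be $0$, so $\|q(t,\cdot)\|_{\L^\infty}\leq\tv\{q(t,\cdot)\}\leq C_0$). Consequently $f(q)$, $f'(q)$ and $f''(q)$ are all uniformly bounded because $f\in C^2$ on a compact subinterval of $(-1,+\infty)$; moreover $|f(q)|\leq \|f'\|_\infty|q|$ thanks to $f(0)=0$, so $\|f(q(t,\cdot))\|_{\L^1}\leq \|f'\|_\infty C_0$. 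From the latter, $|\int_x^0 f(q(t,\xi))\,d\xi|\leq \|f'\|_\infty C_0$ uniformly in $(t,x)$, which gives at once $0<e^{-\|f'\|_\infty C_0}\leq k(t,x)\leq e^{\|f'\|_\infty C_0}$; in particular $\inf k>0$.

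Next I would prove the regularity of $k$. The identity $k_x(t,x)=-f(q(t,x))k(t,x)$ shows that $k_x$ is uniformly $\L^\infty$, giving Lipschitz continuity of $k$ in $x$, and immediately yields $\tv\{k(t,\cdot)\}=\|k_x(t,\cdot)\|_{\L^1}\leq\|k\|_\infty\|f(q(t,\cdot))\|_{\L^1}$, bounded uniformly in $t$. For the Lipschitz continuity in $t$, I would use $|e^a-e^b|\leq e^{\max(a,b)}|a-b|$ together with
$$
\left|\int_x^0\bigl(f(q(t_1,\xi))-f(q(t_2,\xi))\bigr)d\xi\right|\leq \|f'\|_\infty\|q(t_1,\cdot)-q(t_2,\cdot)\|_{\L^1}\leq \|f'\|_\infty\mathrm{Lip}(q)|t_1-t_2|,
$$
which yields a pointwise estimate $\|k(t_1,\cdot)-k(t_2,\cdot)\|_{\L^\infty}\leq C|t_1-t_2|$; combined with the $x$-Lipschitz bound, this gives joint Lipschitz continuity of $k$. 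The total variation of $k_x$ follows from the product rule for BV functions,
$$
\tv\{k_x(t,\cdot)\}=\tv\{f(q)k\}\leq \|f(q)\|_\infty \tv\{k(t,\cdot)\}+\|k\|_\infty\|f'\|_\infty\tv\{q(t,\cdot)\},
$$
all of which are bounded uniformly in $t$.

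The most delicate step, and the one I would treat last, is the Lipschitz continuity of $t\mapsto k_x(t,\cdot)$ in $\L^1(\R_-)$. The natural decomposition is
$$
k_x(t_1,\cdot)-k_x(t_2,\cdot)=-\bigl(f(q(t_1,\cdot))-f(q(t_2,\cdot))\bigr)k(t_1,\cdot)-f(q(t_2,\cdot))\bigl(k(t_1,\cdot)-k(t_2,\cdot)\bigr).
$$
The first term is controlled in $\L^1$ by $\|k\|_\infty\|f'\|_\infty\|q(t_1,\cdot)-q(t_2,\cdot)\|_{\L^1}$, which is Lipschitz in time by hypothesis. The obstacle is the second term, since $k$ itself is not integrable on $\R_-$; here I would exploit that $f(q(t_2,\cdot))$ is $\L^1$ (thanks again to $f(0)=0$ and $q\in\L^1$), so the second term is bounded by $\|k(t_1,\cdot)-k(t_2,\cdot)\|_{\L^\infty}\|f(q(t_2,\cdot))\|_{\L^1}$, which is $\O(|t_1-t_2|)$ by the pointwise time-Lipschitz bound on $k$ established above. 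This closes the argument and establishes the full set of properties \textbf{(K)}.
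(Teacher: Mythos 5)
Your proposal is correct and follows essentially the same route as the paper's proof: the same a-priori bounds on $q$ and $f(q)$, the identity $k_x=-f(q)\,k$ as the workhorse, the exponential estimate for time-Lipschitz continuity of $k$, the BV product rule for $\tv\{k_x\}$, and the add-and-subtract decomposition (yours subtracts $k(t_1)f(q(t_2))$ where the paper subtracts $k(t_2)f(q(t_1))$, a purely cosmetic difference) pairing the $\L^\infty$ bound on the $k$-difference with the $\L^1$ bound on $f(q)$.
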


\begin{proof}
  To begin, notice that the quantity $k$ is well-defined and is
  Lipschitz continuous on $[0,T]\times\R_-$.

  Let $L$ be a Lipschitz constant of the map $[0,T]\ni t\mapsto
  q(t)\in\L^1(\R_-)$. 
  {From} the bounds \eqref{def:calD} one easily
  deduces that
\begin{eqnarray}
 \|q(t,\cdot)\|_{\L^\infty(\R_-)}&\le&C_0\,, \label{apriori_q_infty} \\
  \|f(q(t,\cdot))\|_{\L^\infty(\R_-)}&\le& 
  \max\left\{  \left|f(C_0)\right|, \left|f(\kappa_0)\right| \right\}\,,
  \label{apriori_f_infty} \\
\|f(q(t,\cdot))\|_{\L^1(\R_-)}&\le& 
|f'(\kappa_0)| \cdot \|q(t,\cdot)\|_{\L^1(\R_-)}  \le C_0|f'(\kappa_0)| \,,
\label{L1_norm_of_f}\\
\|f(q(t_1,\cdot)) - f(q(t_2,\cdot)) \|_{\L^1(\R_-)} &\le& 
L |f'(\kappa_0)| \cdot  |t_1 - t_2|\,. 
\label{int_f_lip}
\end{eqnarray}

By the assumptions on $q$ we find that
$$
\left| \int_{x}^0f(q(t,\xi))\,d\xi \right| ~\leq~
\|f(q(t,\cdot))\|_{\L^1(\R_-)} ~\leq~  C_0|f'(\kappa_0)|\,.
$$
Hence the integral term $k$ is bounded and satisfies
$$
0<\exp\left({-C_0|f'(\kappa_0)|}\right) \le k(t,x) \le \exp\left({C_0|f'(\kappa_0)|}\right)\,.
$$
Moreover, for all $0\le t_1 < t_2$ we have
\begin{eqnarray}\nonumber
\left|\int_x^0  \left[f(q(t_1,\xi)) - f(q(t_2,\xi))\right]
\,d\xi\right| &\le&  \|f(q(t_1,\cdot)) - f(q(t_2,\cdot))\|_{\L^1(\R_-)}
  ~\le~ L |f'(\kappa_0)| \cdot  |t_1 - t_2|\,.
  \nonumber
\end{eqnarray}
This leads to  the Lipschitz continuity in $t$ for $k(t,x)$. Namely, for all $x$ we have
\begin{equation}\label{k_lip_in_time}
  \left| k(t_1,x) - k(t_2,x) \right| = \O(1)
  \left|\int_x^0  \left[f(q(t_1,\xi)) - f(q(t_2,\xi))\right]
    \,d\xi\right|
  \le\Hat L  ~~ |t_1-t_2|\,.
\end{equation}
Here the Lipschitz constant $\Hat L$ depends on the parameters $L$, $C_0$, $\kappa_0$.

{From} the definition of $k$,  the derivative function $k_x$ satisfies
\begin{equation}\label{eq:kx}
  k_x = - k  f(q) \,\,\in\, \L^1\cap \L^\infty\,.
\end{equation}
This immediately shows three facts: (i) $k(t,x)$ is Lipschitz in space
variable $x$, (ii) $k(t,\cdot)\in BV(\R_-)$ where the BV bounds are
uniform in $t$, and (iii) $k_x(t,\cdot)\in BV(\R_-)$.

{From} \eqref{eq:kx} we get the estimate on the total variation of
$k_x$
$$
  \tv (k_x) \le
  \tv (k) \cdot \| f(q)\|_{\L^\infty(\R_-)} ~+~ \| k\|_{\L^\infty(\R_-)}\tv (f(q(t,\cdot)))
  \le M \, \tv (q)\,,
$$
with $M$ depending on the parameters $L,C_0,\kappa_0$.

\medskip Finally, we show that $[0,T]\ni t\to
k_x(t,\cdot)\in\L^1(\R_-)$ is Lipschitz continuous. By using
\eqref{k_lip_in_time}, \eqref{L1_norm_of_f} and \eqref{int_f_lip}, one
has
\begin{eqnarray*}
 \hspace{-2cm} \|k_x(t_1,\cdot) - k_x(t_2,\cdot)\|_{\L^1(\R_-)} 
&= &\tv \left\{k(t_1,\cdot) - k(t_2,\cdot)\right\}\\
&=&  \|k(t_1,\cdot) f(q(t_1,\cdot)) - k(t_2,\cdot)f(q(t_2,\cdot))\|_{\L^1(\R_-)}\\
&\le&   \|k(t_1,\cdot) - k(t_2,\cdot)\|_{\L^\infty(\R_-)}  \|f(q(t_1,\cdot)) \|_{\L^1(\R_-)}\\
&&  + ~\|k(t_2,\cdot)\|_{\L^\infty(\R_-)} \|f(q(t_1,\cdot)) - f(q(t_2,\cdot))\|_{\L^1(\R_-)}\\
& \le& \Hat M |t_1 - t_2|
\end{eqnarray*}
with $\Hat M$ depending on the parameters $L$, $C_0$, $\kappa_0$.
\end{proof}

\paragraph{Acknowledgement.}
This paper was started as part of the international research program
on Nonlinear Partial Differential Equations at the Centre for Advanced
Study at the Norwegian Academy of Science and Letters in Oslo during
the academic year 2008--09. The first author would like to acknowledge
also the kind hospitality of the Department of Mathematics, University
of Ferrara. The work of the second author is partially supported by
NSF grant DMS-0908047.

\renewcommand{\baselinestretch}{1}

\end{document}